\newcommand{\algsize}{\footnotesize}
\newcommand{\func}[1]{\ensuremath{\textsc{{#1}}}}
\newcommand{\id}[1]{\ensuremath{\mathsf{{#1}}}}
\newcommand{\const}[1]{\textnormal{\texttt{#1}}}
\algnewcommand\Input{\item[\textbf{Input:}]}%
\algnewcommand\Output{\item[\textbf{Output:}]}%
\title{Computing topological zeta functions of groups, algebras, and modules, II}
\author{Tobias Rossmann}
\affil{\small Fakult\"at f\"ur Mathematik, Universit\"at Bielefeld, D-33501
  Bielefeld, Germany}
\date{September 2014}
\begin{document}

\maketitle
\thispagestyle{empty}

\begin{abstract}
  \small
  Building on our previous work \cite{topzeta},
  we develop the first practical algorithm for computing topological zeta functions of
  nilpotent groups, non-associative algebras, and modules.
  While we previously depended upon non-degeneracy assumptions,
  the theory developed here allows us to overcome these restrictions
  in various interesting cases, far extending the scope of \cite{topzeta}.
\end{abstract}

\blankfootnote{\indent{\itshape 2000 Mathematics Subject Classification.}
  11M41, 20F69, 14M25.

  This work is supported by the DFG Priority Programme
  ``Algorithmic and Experimental Methods in Algebra, Geometry and Number
  Theory'' (SPP 1489).}

\tableofcontents

\section{Introduction}
\label{s:intro}

\paragraph{Topological zeta functions.}
A recent addition to the theory of zeta functions of algebraic structures,
topological zeta functions of groups and algebras were introduced by du~Sautoy
and Loeser \cite{dSL04} as asymptotic invariants related to the enumeration of subobjects.
They are limits as the ``prime tends to one''  of the local subobject zeta
functions due to Grunewald, Segal, and Smith \cite{GSS88} in the same way that
topological zeta functions of polynomials due to Denef and Loeser
\cite{DL92} are limits of Igusa's local zeta functions \cite{Igu00}. 

For an informal explanation of this limit,
recall that for a finitely generated torsion-free nilpotent group $G$,
the local subgroup zeta function $\zeta_{G,p}(s)$ of $G$ at
the prime $p$ is given by the Dirichlet series $\zeta_{G,p}(s) = \sum_{e=0}^{\infty}
a_{p^e}(G) p^{-es}$, where $a_{p^e}(G)$ denotes the number of subgroups of index
$p^e$ in $G$.
Informally, the topological subgroup zeta function $\zeta_{G,\topo}(s)$ of $G$
is the rational function in $s$ obtained as the constant term of
$(1-p^{-1})^d \zeta_{G,p}(s)$ as a series in $p-1$, where $d$ is the
Hirsch length of $G$.
For example, it is well-known that
$\zeta_{\ZZ^d,p}(s) = \frac 1 {(1-p^{-s}) (1-p^{1-s}) \dotsb (1-p^{d-1-s})}
$
and we find that $\zeta_{\ZZ^d,\topo}(s) = \frac 1 {s(s-1)\dotsb(s-(d-1))}$.

A decade after their introduction, apart from a short list of examples in
\cite[\S 9]{dSL04}, topological zeta functions of groups, algebras, and modules
remained uncharted territory.
It is the purpose of the project begun in \cite{topzeta} and continued here to
change that. 

\paragraph{Central objects: toric data.}
At the heart of the present article lies
the notion of a \itemph{toric datum}. 
A toric datum consists of a half-open cone within some Euclidean space and a
finite collection of Laurent polynomials. 
We will begin our study of toric data in \S\ref{s:tdata}, where we will
also relate them to the ``cone integral data'' of du~Sautoy and Grunewald \cite{dSG00}. 
As we will see, toric data give rise to associated $p$-adic integrals (closely
related to the ``cone integrals'' from \cite{dSG00}) and to topological zeta
functions by means of a limit ``$p \to 1$''. 
Most importantly, topological zeta functions arising from the enumeration of
subgroups, subalgebras, and submodules can be expressed in terms of toric data.

In general, the computation of such zeta functions relies on the same
impractical ingredient as the computation of cone integrals: resolution of
singularities.
In suitably non-degenerate settings, explicit resolutions
can be obtained via so-called ``toric modifications'' \cite{Oka97}.
Based on such classical results from toric geometry and previous
applications to Igusa-type zeta functions (\cite{DH01}, in particular),
\cite{topzeta} provides us with explicit convex-geometric formulae for
topological zeta functions associated with toric data under non-degeneracy
assumptions.
Unfortunately, when it comes to the computation of topological subgroup, 
subalgebra, or submodule zeta functions, the practical scope of \cite{topzeta}
on its own is limited: more often than not, the non-degeneracy assumptions are
violated for examples of interest.

\paragraph{Main result.}
Our main result, Algorithm~\ref{alg:main}, is a practical algorithm 
which seeks to compute topological zeta functions associated with toric data in
favourable situations, significantly extending the applicable range of
the ideas in \cite{topzeta}.
The practicality of Algorithm~\ref{alg:main} is demonstrated by a
computer implementation \cite{zeta} which also includes a database containing a
substantial number of topological subalgebra and ideal zeta functions computed
using Algorithm~\ref{alg:main}.
These computations provide strong evidence for the intriguing features of such
topological zeta functions predicted by the conjectures in \cite[\S 8]{topzeta}.

Algorithm~\ref{alg:main} is based on a series of algebraic and convex-geometric operations,
a subset of which constitutes an algorithmic version of \cite[Thm~6.7]{topzeta}.
By adding further steps, we extend the scope of Algorithm~\ref{alg:main} beyond the
non-degeneracy assumptions of \cite{topzeta}. 
Some of these steps (such as \itemph{balancing} in \S\ref{ss:balance}) can be interpreted
within the geometric framework of toric modifications. 
Others (e.g.\ \itemph{simplification} in \S\ref{ss:simplify}) are most naturally
regarded as systematic and generalised versions of ``tricks'' for the evaluation
of $p$-adic integrals previously employed by Woodward \cite{Woo05} in a
semi-automatic fashion.
Yet others (such as \itemph{reduction} in \S\ref{ss:reduce}) are inspired by the theory of
Gr\"obner bases.

\paragraph{Previous computations.}
While Algorithm~\ref{alg:main} is the first of its kind specifically designed to
compute the \itemph{topological} zeta functions considered here, a significant number of
\itemph{local} zeta functions of groups and algebras have been previously
computed, see e.g.\ \cite{dSW08,zfarchive}.   
Although the informal definition of topological zeta functions from above
suggests that they can be deduced from local formulae, a rigorous
approach requires additional information, cf.\ \cite[\S\S 7,9.3]{dSL04}
and see \cite[Rem.\ 5.20]{topzeta}.

A significant proportion of the known local zeta functions of groups and
algebras were found by Woodward using a combination of machine-computations and
human insight \cite{Woo05}.
The number of zeta functions he managed to compute is particularly
impressive in view of the relatively elementary nature of his method which is
based on skillful (but ultimately ad hoc) applications of certain ``tricks''
\cite[\S 2.3.2]{Woo05} for computing with $p$-adic integrals.
Unfortunately, due to the reliance of his computations on human guidance, they
are difficult to reproduce.

Although, as we mentioned before, some of Woodward's ``tricks'' can be regarded
as special cases of the simplification step in \S\ref{ss:simplify}, 
the $p$-adic part of our method does not generalise his approach. 
In particular, using Algorithm~\ref{alg:main}, we managed to determine
topological zeta functions whose local versions Woodward could not compute (see
\S\ref{ss:Fil4}) and, conversely, there are examples of local zeta functions
computed by him whose topological counterparts cannot be determined using
Algorithm~\ref{alg:main}.

The techniques developed in the present article are specifically designed for
the computation of topological zeta functions of groups, algebras, and modules.
However, through our use of \cite{topzeta}, we draw heavily upon 
formulae for Igusa-type zeta functions and associated topological zeta functions
that have been obtained under non-degeneracy assumptions, see, in particular,
\cite{DL92, DH01,VZG08}.

\paragraph{Outline.}
After a brief reminder on local zeta functions of groups, algebras, and modules 
in \S\ref{s:background_gam},
we introduce toric data and associated $p$-adic integrals in \S\ref{s:tdata}.
The central section of the present article is \S\ref{s:main} which is devoted to
describing our main algorithm and its various components; details on the latter
will be provided in subsequent sections.
In \S\ref{s:balanced_and_regular}, we consider toric data which 
are balanced and regular---these two notions provide the
main link between the present article and its predecessor \cite{topzeta}.
In particular, as we will explain in \S\ref{s:topreg}, topological zeta
functions associated with regular toric data can be computed using
\cite{topzeta}.
In order to keep the present article reasonably self-contained, key facts from
\cite{topzeta} will be briefly recalled as needed.
The two remaining ingredients of Algorithm~\ref{alg:main}, namely simplification and
reduction are discussed in \S\ref{s:simplify}.
Practical aspects and the author's implementation \textsf{Zeta} \cite{zeta} of 
Algorithm~\ref{alg:main} are briefly discussed in \S\ref{s:imp}.
Finally, in \S\ref{s:app}, we consider specific examples which
illustrate key steps of Algorithm~\ref{alg:main} and which also demonstrate its
practical strength.

\subsection*{Acknowledgements}

The author would like to thank Christopher Voll for numerous inspiring
discussions.

\subsection*{{\normalfont \it Notation}}
\label{ss:notation}

The symbol ``$\subset$'' signifies not necessarily proper inclusion.
We let $\NN$, $\ZZ$, $\RR$, and $\CC$ denote the natural numbers (without
zero), integers, real and complex numbers, respectively.
We write $\NN_0 = \NN\cup\{ 0\}$.
By a $p$-adic field, we mean a finite extension of the field $\QQ_p$ of $p$-adic numbers.
Throughout this article, $k$ is a number field with ring of integers~$\fo$.
We let $K$ denote a $p$-adic field endowed with an embedding $K\supset k$.
We write $\fO_K$ and $\fP_K$ for the valuation ring of $K$ and its maximal
ideal, respectively.
We further let $\pi_K$ denote a uniformiser and let $\nu_K$ be the valuation on
$K$ with $\nu_K(\pi) = 1$; we write $\nu_K(\xx) = \bigl(\nu_K(x_1),\dotsc,\nu_K(x_n)\bigr)$
for $\xx = (x_1,\dotsc,x_n)\in K^n$.
Further write $\abs{x}_K = q_K^{-\nu_K(x)}$ and $\norm{M}_K =
\sup\bigl(\abs{x}_K : x\in M\bigr)$, where $q_K = \card{\fO_K/\fP_K}$ and
$M\subset K$.
Finally, $\mu_K$ denotes the Haar measure on $K^n$ with $\mu_K(\fO_K^n) = 1$, where
$n$ will be clear from the context.
By a non-associative algebra, we mean a not necessarily associative one.
Write $\Torus^n = \Spec\bigl(\ZZ[X_1^{\pm 1},\dotsc,X_n^{\pm 1}]\bigr)$.
For a commutative ring $R$, we identify $\Torus^n(R) = (R^\times)^n$
and write $\Torus^n_R = \Torus^n \times \Spec(R)$. 
We often write $\XX_{\phantom 1\!} \!= (X_1,\dotsc,X_n)$ and $\XX^\alpha_{\phantom
  1}= X_1^{\alpha_1}\dotsb X_n^{\alpha_n}$.

\section{Background: zeta functions of groups, algebras, and modules}
\label{s:background_gam}

The following is an abridged version of \cite[\S 2]{topzeta}. 
We investigate the following mild generalisations 
of subring, ideal, and submodule zeta functions, cf.\ \cite{Sol77,GSS88}.

\begin{defn}
  \label{d:subzeta}
  Let $R$ be the ring of integers in a number field or in a $p$-adic field.
  \begin{enumerate}
    \item
      Let $M$ be a free $R$-module of finite rank and let $\cE$ be a
      subalgebra of $\End_R(M)$.
      The \emph{submodule zeta function} of $\cE$ acting on $M$ is
      \[
      \zeta_{\cE\acts M}(s) = \sum_{n=1}^\infty \noof{\{U :
        \text{$U$ is an $\cE$-submodule of $M$ with $\idx{M:U} = n$}\}}
      \dtimes n^{-s}.
      \]
    \item
      \label{d:subzeta2}
      Let $\cA$ be a non-associative $R$-algebra whose underlying
      $R$-module is free of finite rank.
      The \emph{subalgebra zeta function} of $\cA$ is
      \[
      \zeta_{\cA}(s) = \sum_{n=1}^\infty \noof{\{\cU :
        \text{$\cU$ is an $R$-subalgebra of $\cA$ with $\idx{\cA:\cU} = n$}\}}
      \dtimes n^{-s}.
      \]
      Let $\Omega(\cA)$ be the $\fo$-subalgebra of $\End_{\fo}(\cA)$ generated by 
      $x\mapsto ax$ and $x\mapsto xa$ with $a$ ranging over $\cA$.
      Then the \emph{ideal zeta function} of $\cA$ is
      $\zeta_{\cA}^\normal(s) = \zeta_{\Omega(\cA)\acts \cA}(s)$.
    \end{enumerate}
\end{defn}

When $\fo = \ZZ$, we refer to non-associative $\ZZ$-algebras as
non-associative rings, to subalgebras as subrings, etc.
Let $\Tr_d(R)$ be the ring of upper triangular $d\times d$-matrices
over $R$.
The following result of du Sautoy and Grunewald 
is stated in the version from \cite{topzeta}.

\begin{thm}[{\cite[\S 5]{dSG00}}]
  \label{thm:coneint}
  \quad
  \begin{enumerate}
    \item
      \label{thm:coneint1}
      Let $\cA$ be a non-associative $\fo$-algebra which is free of rank $d$ as an
      $\fo$-module.
      Then there exists a finite set $\ff \subset \fo[X_{ij}^{\pm 1} : 1\le i \le j\le d]$ 
      with the following property:

      if $K \supset k$ is a $p$-adic field, then
      \begin{equation}
        \label{eq:coneint}
        \zeta_{\cA\otimes_{\fo}{\fO_K}}(s) =
        (1-q_K^{-1})^{-d}
        \int_{\bigl\{ \xx\in \Tr_d(\fO_K) : \norm{\ff(\xx)}_K\le 1\bigr\}}
        \abs{x_{11}}_K^{s-1}\dotsb \abs{x_{dd}}_K^{s-d} \dd\mu_K(\xx),
      \end{equation}
      where we identified $\Tr_d(K) \approx K^{\binom{d+1}2}$
      and we regarded $\cA\otimes_{\fo}\fO_K$ as an $\fO_K$-algebra.
  \item
    \label{thm:coneint2}
    Let $\cE$ be an $\fo$-subalgebra of $\End_{\fo}(M)$, where $M$ is a
    free $\fo$-module of rank $d$.
    Then there are Laurent polynomials as in (\ref{thm:coneint1}) such
    that the conclusion of (\ref{thm:coneint1}) holds
    for $\zeta_{(\cE\otimes_{\fo}{\fO_K}) \acts (M\otimes_{\fo}{\fO_K})}(s)$ in place of $\zeta_{\cA\otimes_{\fo}{\fO_K}}(s)$.
  \end{enumerate}
\end{thm}

As for subgroups, the enumeration of normal subgroups of a torsion-free finitely generated
nilpotent group $G$ gives rise to local  
normal subgroup zeta functions $\zeta_{G,p}^{\normal}(s)$.

\begin{thm}[{\cite[\S 4]{GSS88}}]
  \label{thm:nilpotent}
  Let $G$ be a finitely generated torsion-free nilpotent group
  with associated Lie $\QQ$-algebra $\fL(G)$ under the Mal'cev correspondence.
  Let $\cL\subset \fL(G)$ be a $\ZZ$-subalgebra which 
  is finitely generated as a $\ZZ$-module and whose $\QQ$-span is $\fL(G)$.
  Then for almost all primes $p$, we have $\zeta_{G,p}^{\phantom o}(s) =
  \zeta_{\cL\otimes_{\ZZ}\ZZ_p}^{\phantom o}(s)$ and $\zeta_{G,p}^\normal(s) = 
  \zeta_{\cL\otimes_{\ZZ}\ZZ_p}^\normal(s)$.
\end{thm}

\paragraph{Constructing Laurent polynomials from algebras and modules.}
We now recall the explicit description of $\ff$ in Theorem~\ref{thm:coneint}
given by du Sautoy and Grunewald; our exposition is equivalent to
\cite[Rem.\ 2.7(ii)]{topzeta}.
First, choose $\fo$-bases of $\cA$ or $M$ in Theorem~\ref{thm:coneint} to identify
$\cA = \fo^d$ or $M = \fo^d$ as $\fo$-modules, respectively.
We are then either given a bilinear multiplication $\beta\colon
\fo^d\otimes_{\fo}\fo^d \to \fo^d$ turning $\fo^d$ into an 
$\fo$-algebra or a finite generating set $\cM$ of $\cE \subset \Mat_d(\fo)$.
Let $R := \fo[X_{ij} :1\le i\le j \le d]$ and let $C:=[X_{ij}]_{i\le j} \in \Tr_d(R)$
with rows $C_1,\dotsc,C_d$. We think of $C$ as parameterising a generic
$\fo$-submodule of $\fo^d$ via its row span.
We extend $\beta$ to a map $R^d\otimes_R R^d\to R^d$ in the natural way.
For Theorem~\ref{thm:coneint}(\ref{thm:coneint1}),
let $\ff$ consist of the non-zero entries of
$\det(C)^{-1} \beta(C_m,C_n) \adj(C)$ for $1\le m,n\le d$;
for part (\ref{thm:coneint2}), we instead consider the entries of
$\det(C)^{-1} (CM) \adj(C)$ as $M$ ranges over $\cM$.

\section{Toric data}
\label{s:tdata}

In this section, we introduce the basic object for all
of our algorithms: toric data.
These objects are closely related to the cone integral data introduced in
\cite{dSG00}.
In particular, they also give rise to associated $p$-adic integrals and
topological zeta functions.

\subsection{Basics}
\label{ss:basics}

By a \emph{half-open cone} in $\RR^n$ we mean a set of the form
\[
\cC_0 = \bigl\{
\omega\in \RR^n : \bil{\phi_1}\omega, \dotsc, \bil{\phi_d}\omega \ge 0,
\quad
\bil{\chi_1}\omega, \dotsc, \bil{\chi_e}\omega > 0
\bigr\},
\]
where $\bil{\blank}{\blank}$ denotes the standard inner product and
$\phi_i,\chi_j\in \RR^n$.
We say that $\cC_0$ is \emph{rational} if we may choose the $\phi_i,\chi_j$
among elements of $\ZZ^n$.

\begin{defn}
  \label{d:tdatum}
  A \emph{toric datum} in $n$ variables over $k$ is a
  pair $\cT = (\cC_0;\ff)$ consisting of a half-open rational cone
  $\cC_0\subset \Orth^n$ and a finite family $\ff = (f_1,\dotsc,f_r)$ of
  Laurent polynomials $f_1,\dotsc,f_r \in k[X_1^{\pm 1},\dotsc,X_n^{\pm 1}]$.
\end{defn}

We often write $(\cC_0;f_1,\dotsc,f_r)$ instead of $(\cC_0;\ff)$.
Furthermore, we usually omit the references to $n$ and $k$.
The non-negativity assumption $\cC_0 \subset \Orth^n$ is included to 
ensure the convergence of the integrals in \S\ref{ss:assoczeta} below.
As we will explain in Remark~\ref{r:special}, toric data provide us with a
convenient formalism for describing the domain of integration for the integrals
in Theorem~\ref{thm:coneint}. 
Recall the notational conventions from p.\!~\pageref{ss:notation}.
\begin{notation}
  Given $\cT = (\cC_0;\ff)$ as in Definition~\ref{d:tdatum}
  and a $p$-adic field $K\supset k$,
  we write
  $$\cT_K := \Bigl\{ \xx\in \Torus^n(K) : \nu_K(\xx) \in \cC_0,
  \norm{\ff(\xx)}_K \le 1 \Bigr\}.$$
\end{notation}

\paragraph{Toric data and cone conditions.}
For an explanation of our terminology, suppose that $\cC_0$ is closed.
Then each of the conditions $\nu_K(\xx)\in \cC_0$ and $\norm{\ff(\xx)}_K \le 1$
can be expressed as a conjunction of finitely many divisibility conditions
$\divides{v(\xx)}{w(\xx)}$, where $v,w\in k[\XX]$.
Indeed, let $\cC_0 = \{ \omega\in \RR^n : \bil{\phi_1}\omega,
\dotsc, \bil{\phi_d}\omega\ge 0\}$ for $\phi_1,\dotsc,\phi_d \in \ZZ^n$ 
and $\ff = (f_1,\dotsc,f_r)$.
It is easy to see that $(\cC_0;\,\ff)_K = \bigl(\Orth^n; \, f_1,\dotsc,f_r,
\XX^{\phi_1},\dotsc,\XX^{\phi_d}\bigr)_K$.
Next, for a Laurent polynomial $g = \XX^{-\gamma}  g^+$ with $\gamma \in
\NN_0^n$ and $g^+\in \fO_K[\XX]$, the condition $\abs{g(\xx)}_K\le
1$ (where $\xx\in \Torus^n(K) \cap \fO_K^n = (\Orth^n;\emptyset)_K$) is equivalent to
$\divides{\xx^{\gamma}}{g^+(\xx)}$.

Hence, for $\cC_0$ closed, a toric datum gives rise to a special
case of a ``cone condition'' as defined in \cite[Def.\ 1.2(1)]{dSG00}.
Moreover,  if $\cD_K$ denotes the set of $K$-points of such a cone condition,
then $(\cC_0;\ff)_K = \cD_K \cap \Torus^n(K)$.

\begin{defn}
  \label{d:trivial}
  A toric datum $(\cC_0;\ff)$ is \emph{trivial} if $\cC_0 = \emptyset$.
\end{defn}

\subsection{Zeta functions associated with toric data}
\label{ss:assoczeta}

Let $\beta\in\Mat_{m\times n}(\NN_0)$ with rows $\beta_1,\dotsc,\beta_m$.
Given a toric datum $\cT$ in $n$ variables over $k$ and a $p$-adic field $K\supset k$,
we consider the ``zeta function'' defined by 
\begin{equation}
  \label{eq:basic_integral}
  \Zeta^{\cT,\beta}_K(s_1,\dotsc,s_m) :=
  \int_{\cT_K}
  \abs[\big]{\xx^{\beta_1}}_K^{s_1} \dotsb\abs[\big]{\xx^{\beta_m}}_K^{s_m}
  \dd\mu_K(\xx),
\end{equation}
where $s_1,\dotsc,s_m\in \CC$ with $\Real(s_j) \ge 0$;
convergence is guaranteed by the non-negativity assumptions
$\cC_0\subset\Orth^n$, $\beta_1,\dotsc,\beta_m\in \NN_0^n$.
We note that  $\Zeta_K^{\cT,\beta}(s_1,\dotsc,s_m)$ is a
special case of the zeta functions studied in \cite[\S 4]{topzeta}.

\begin{rem}[Local subalgebra and submodule zeta functions]
  \label{r:special}
  Disregarding factors of the form $(1-q_K^{-1})^{\pm d}$,
  Theorem~\ref{thm:coneint} shows that zeta functions associated with toric data
  generalise local zeta functions arising from the enumeration of (normal)
  subgroups,  subalgebras, and submodules as in \S\ref{s:background_gam},
  cf.\ \cite[Rem.\ 4.12]{topzeta}.
  Indeed, let $\cA$ (or $M$) in Theorem~\ref{thm:coneint} have rank $d$ and let $n =
  d(d+1)/2$.
  We identify $\Tr_d \approx \AA^n$ via $(x_{ij}) \mapsto
  (x_{11},\dotsc,x_{1d},x_{22},\dotsc, x_{dd})$
  and let $\ff\subset \fo[X_1^{\pm 1},\dotsc,X_n^{\pm 1}]$ be an
  associated family of Laurent polynomials as in Theorem~\ref{thm:coneint};
  see the end of \S\ref{s:background_gam} for an explicit construction.
  Let $\cT = (\Orth^n;\ff)$ and let $\beta\in\Mat_{d\times n}(\NN_0)$ be the matrix
  whose $j$th row corresponds to the elementary matrix with entry $1$ in position
  $(j,j)$ under the above isomorphism $\Tr_d \approx \AA^n$. 
  Then $\Zeta^{\cT,\beta}_K(s_1,\dotsc,s_m)$ specialises
  to the integral in Theorem~\ref{thm:coneint} via 
  $(s_1,\dotsc,s_m) \mapsto (s-1,\dotsc,s-d)$.
\end{rem}

\paragraph{Relationship with cone integrals.}
In \S\ref{ss:basics}, we explained how toric data give rise to cone
conditions from \cite{dSG00} (at least when $\cC_0$ is closed).
In the same spirit, we may regard the integrals in \eqref{eq:basic_integral} as
special cases of (multivariate versions of) the cone integrals in \cite{dSG00};
the pair $(\cT,\beta)$ takes the place of the ``cone integral data'' in \cite{dSG00}.
One of the two important special features of \eqref{eq:basic_integral} compared
with cone integrals is that we insist on left-hand sides in the divisibility
conditions describing the domain of integration being monomial (just as they are
in Theorem~\ref{thm:coneint}).
By expressing such divisibility conditions in terms of Laurent
polynomials as in \S\ref{ss:basics}, we naturally adopt a ``toric'' point
of view.
This perspective will prove to be especially useful in combination with the
second key feature of the integrals \eqref{eq:basic_integral}, namely the
presence of a not necessarily closed ambient half-open cone $\cC_0$.
Focusing exclusively on integrals of the shape \eqref{eq:basic_integral} allows 
us to develop specialised techniques for manipulating and evaluating them.

\paragraph{Evaluation in theory: ``explicit formulae''.}
Consider the $K$-indexed family of zeta functions $\Zeta_K^{\cT,\beta}$
defined in terms of a toric datum $\cT$ and a matrix $\beta$ 
in \eqref{eq:basic_integral}. 
Going back to work of Denef~\cite{Den87} and du~Sautoy and Grunewald \cite{dS00},
using powerful but typically impractical techniques such as resolution of 
singularities, it can be shown (cf.\ \cite[Ex.~5.11(vi)]{topzeta}) that there 
are finitely many $W_i(\qq,\tee_1,\dotsc,\tee_m)\in
\QQ(\qq,\tee_1,\dotsc,\tee_m)$ and 
$k$-varieties $V_i$ for $i\in I$, say, with the following property:
if $K\supset k$ is a $p$-adic field, then,
unless $\fp_K := \fo \cap \fP_K$ belongs to some finite exceptional set
(depending on $\cT$ only), 
\begin{equation}
  \label{eq:basic_denef}
  \Zeta_K^{\cT,\beta}(s_1,\dotsc,s_m) = 
  \sum_{i\in I} \noof{\bar V_i(\fO_K/\fP_K)} \dtimes W_i(q_K^{\phantom{s_1}},q_K^{-s_1},\dotsc,q_K^{-s_m}),
\end{equation}
where $\bar V_i$ is the reduction modulo $\fp_K$ of $V_i$.
We understand the task of ``computing'' the $\Zeta_K^{\cT,\beta}(s_1,\dotsc,s_m)$ to
be the explicit construction of $V_i$ and $W_i(\qq,\tee_1,\dotsc,\tee_m)$ as in \eqref{eq:basic_denef}.

\paragraph{Topological zeta functions.}
As originally observed by Denef and Loeser \cite{DL92} for Igusa's local zeta
function, given an ``explicit formula'' as in \eqref{eq:basic_denef}, under 
additional assumptions regarding the shapes of the
$W_i(\qq,\tee_1,\dotsc,\tee_m)$ (see \S\ref{ss:M}),
we may ``pass to the limit $q_K \to 1$'' and obtain the associated topological
zeta function  
\begin{equation}
  \label{eq:basic_top}
  \Zeta^{\cT,\beta}_{\topo}(s_1,\dotsc,s_m) =
  \sum_{i\in I}
  \Euler(V_i(\CC)) \dtimes \red{W_i}(s_1,\dotsc,s_m) \in \QQ(s_1,\dotsc,s_m);
\end{equation}
here $\Euler(V_i(\CC))$ denotes the topological Euler characteristic 
with respect to any embedding of $k$ into $\CC$ and 
$\red{W_i}(s_1,\dotsc,s_m) \in \QQ(s_1,\dotsc,s_m)$ is
the constant term of $W_i(q_K^{\phantom{s_1}},q_K^{-s_1},\dotsc,q_K^{-s_m})$,
formally expanded as a series in $q_K-1$.
A particularly noteworthy consequence of \cite{DL92} is 
that the right-hand side of \eqref{eq:basic_top} is independent of the choice
of the family $\bigl(V_i,W_i(\qq,\tee_1,\dotsc,\tee_m)\bigr)_{i\in I}$ in \eqref{eq:basic_denef}.

\paragraph{Evaluation in practice: non-degeneracy.}
Let $\cT = (\cC_0;\ff)$ be a toric datum over $k$.
If $\ff$ is non-degenerate relative to $\cC_0$
in the sense of \cite[Def.\ 4.2(i)]{topzeta}, 
then \cite[Thm~4.10]{topzeta} yields an effective version of
\eqref{eq:basic_denef} in the sense that
it provides \itemph{explicit} descriptions
of varieties $V_i$ and rational functions $W_i(\qq,\tee_1,\dotsc,\tee_m)$
in terms of convex-geometric data associated with various cones and polytopes
attached to $\cT$ and $\beta$, see Theorem~\ref{thm:regular_padic}.
While further computations involving these objects might still be expensive or
even infeasible in large dimensions, if available, they are however much more 
useful than formulae 
obtained using general resolution algorithms; the latter are usually only
practical for $n \le 3$.
Recall from Remark~\ref{r:special} that for the computation of subalgebra or
submodule zeta functions, $n = d(d+1)/2$, where $d$ is the additive rank of the
object under consideration.

If $\ff$ is even globally non-degenerate (see \cite[Def.\ 4.2(ii)]{topzeta}), then the Euler
characteristics in \eqref{eq:basic_top} can be expressed in terms of mixed
volumes via the Bernstein-Kushnirenko-Khovanskii Theorem, yielding an
effective form of \eqref{eq:basic_top}, see \cite[Thm~6.7]{topzeta}.
The theory underpinning Algorithm~\ref{alg:main} in \S\ref{s:main} (to be
developed in the present article) draws upon and extends this result to overcome
certain instances of degeneracy. 

\paragraph{Topological subalgebra and submodule zeta functions.}
Under rather weak technical assumptions on the $W_i(\qq,\tee_1,\dotsc,\tee_m)$
appearing in \eqref{eq:basic_top}, passing from local to topological zeta functions
commutes with affine specialisations of the variables $s_1,\dotsc,s_m$, see \cite[Rem.\ 5.15]{topzeta}.
In particular, we obtain univariate versions of \eqref{eq:basic_denef}
and \eqref{eq:basic_top} arising from the integrals in
Theorem~\ref{thm:coneint} and thus rigorous definitions
of topological subalgebra and submodule zeta functions,
see \cite[Def.\ 5.17]{topzeta} or \S\ref{ss:M} below.

\section{The main algorithm}
\label{s:main}

In this section, we give a high-level description of an algorithm
which seeks to compute topological zeta functions associated with toric data.
The main application that we have in mind is the computation of
topological subalgebra and submodule zeta functions via the univariate
specialisations explained in Remark~\ref{r:special}.

\paragraph{The algorithm.}
We suppose that we are given a toric datum $\cT^0 = (\cC_0;\ff)$ in $n$
variables over $k$---in practice, we are primarily interested in the case where $\ff$ is a family of
Laurent polynomials arising from Theorem~\ref{thm:coneint} and $\cC_0 = \Orth^n$ (where
$n = d(d+1)/2$) as in Remark~\ref{r:special}.
Given $\cT^0$ and  $\beta\in \Mat_{m\times n}(\NN_0)$,
the function \func{TopologicalZetaFunction} (Algorithm~\ref{alg:main}) attempts
to compute the topological zeta function
$\Zeta_{\topo}^{\cT^0,\beta}(\ess_1,\dotsc,\ess_m) \in
\QQ(\ess_1,\dotsc,\ess_m)$ associated with the integrals
$\Zeta_K^{\cT^0,\beta}(s_1,\dotsc,s_m)$ in \eqref{eq:basic_integral};
see \S\ref{ss:toptcc} for a rigorous definition of $\Zeta_{\topo}^{\cT^0,\beta}(\ess_1,\dotsc,\ess_m)$.
We note that from now on, we use bold face letters $\ess_j$ to distinguish
variables over $\QQ$ from the complex numbers $s_j$ in \S\ref{ss:assoczeta}.

\begin{algorithm}
  \caption{$\func{TopologicalZetaFunction}(\cT^0, \beta)$}
  \label{alg:main}
  \begin{algorithmic}[1]
    \algsize
    \Input a toric datum $\cT^0$ in $n$ variables over $k$, a matrix
    $\beta \in \Mat_{m\times n}(\NN_0)$
    \Output 
    the topological zeta function $\Zeta_{\topo}^{\cT^0,\beta}(\ess_1,\dotsc,\ess_m) \in \QQ(\ess_1,\dotsc,\ess_m)$ or \const{fail}

    \State $\id{unprocessed} \gets [\cT^0]$, $\id{regular} \gets [\,]$,
    \Comment Stage I
    \label{alg:init_main}

    \While{$\id{unprocessed}$ is non-empty}
    \label{alg:begin_main_loop}
    \State remove an element $\cT$ from \id{unprocessed}
    \State $\cT \gets \func{Simplify}(\cT)$
    \label{alg:simplification_step}
    
    \If{$\cT$ is not balanced}
    \State $\id{new} \gets \func{Balance}(\cT)$
    \ElsIf{$\cT$ is regular}
    \State add $\cT$ to \id{regular}
    \State $\id{new} \gets [\,]$
    \Else
    \State $\id{new} \gets \func{Reduce}(\cT)$
    \label{alg:begin_reduce}
    \If{$\id{new} = \const{fail}$}
    {\Return \const{fail}}
    \label{alg:end_reduce}
    \label{alg:reduce_failed}
    \EndIf
    \EndIf
    \State add the non-trivial elements of \id{new} to \id{unprocessed}
    \EndWhile
    \label{alg:end_main_loop}

    \State \Return $\sum\limits_{\cT\in
      \id{regular}}\func{EvaluateTopologically}(\cT,\beta)$
    \label{alg:summing_up}
    \Comment Stage II
  \end{algorithmic}
\end{algorithm}

We now explain the structure of Algorithm~\ref{alg:main} and the roles
played by the functions \func{Simplify}, \func{Balance}, \func{Reduce}, and
\func{EvaluateTopologically}. 
Details will be given in the following sections. 

\paragraph{Stage I: the main loop.}
During the first stage (lines~\ref{alg:init_main}--\ref{alg:end_main_loop}) of
Algorithm~\ref{alg:main}, we maintain two lists, \id{unprocessed} and
\id{regular}, of toric data.
The essential point here is that unless the execution of
Algorithm~\ref{alg:main} is aborted in line~\ref{alg:reduce_failed},
each iteration of the loop in
lines~\ref{alg:begin_main_loop}--\ref{alg:end_main_loop} preserves the following
property: 
\begin{itemize}
\item[(\textlabel{$\clubsuit$}{LOOPINV})]
There exists a finite $S \subset \Spec(\fo)$ such that if $K\supset k$ is a
$p$-adic field with $\fo\cap\fP_K\not\in S$,
then $\cT^0_K$ is the disjoint union of all $\cT_K$ with $\cT$ ranging
over $\id{unprocessed} \sqcup \id{regular}$.
\end{itemize}
In particular, we always have
$\Zeta^{\cT^0,\beta}_{\topo}(\ess_1,\dotsc,\ess_m) = \sum_{\cT}
\Zeta^{\cT, \beta}_{\topo}(\ess_1,\dotsc,\ess_m)$, where
$\cT$ again ranges over $\id{unprocessed} \sqcup \id{regular}$ (Lemma~\ref{lem:topadd}).

The central notions, to be defined in \S\ref{s:balanced_and_regular}, 
featuring in the while-loop in Algorithm~\ref{alg:main} are those of
\itemph{balanced} and \itemph{regular} toric data.
The function \func{Balance} (see \S\ref{ss:balance}) 
takes as input an arbitrary toric datum and
returns a distinguished family of associated balanced ones.
Among balanced toric data, regular ones constitute a subclass
which is intimately related to the concept of non-degeneracy
used in \cite{topzeta}.
Namely, given a toric datum $\cT = (\cC_0;\ff)$, the family
$\ff$ is non-degenerate relative to $\cC_0$ in the sense of \cite[Def.\
4.2(i)]{topzeta} if and only if each element of $\func{Balance}(\cT)$ is regular
(Proposition~\ref{prop:nd_vs_regular}).
Our objective during the first stage of Algorithm~\ref{alg:main} is to
successively modify and decompose toric data until, hopefully, at
some point all toric data under consideration will be regular.
In the reduction step (lines~\ref{alg:begin_reduce}--\ref{alg:end_reduce}),
given a balanced toric datum $\cT$ that fails to be regular,
the function \func{Reduce} (see \S\ref{ss:reduce}) attempts to mitigate this
failure of regularity by allowing us to replace $\cT$ by a suitable family of toric data.
This step is  supported by the function \func{Simplify}
(see \S\ref{ss:simplify}) which removes certain redundancies from its input.

\paragraph{Stage II: topological evaluation.}
After successful termination of the while-loop in Algorithm~\ref{alg:main}, the
computation of the topological zeta function
$\Zeta^{\cT^0,\beta}_{\topo}(\ess_1,\dotsc,\ess_m)$  is reduced to
computing the topological zeta functions associated with a (possibly
large) number of {regular} toric data via the function
\func{EvaluateTopologically} (see \S\ref{ss:EvaluateTopologically}).
Given a regular toric datum, by the aforementioned connection between
regularity and non-degeneracy and as previously indicated in
\S\ref{ss:assoczeta}, \cite{topzeta} provides us with explicit 
convex-geometric $p$-adic formulae.
Using \cite[\S 6]{topzeta}, we may then deduce a formula for 
$\Zeta^{\cT^0,\beta}_{\topo}(\ess_1,\dotsc,\ess_m)$ in terms of the
topological Euler characteristics of certain subvarieties of tori.
However, in contrast to the assumption of global non-degeneracy in
\cite[Thm~6.7]{topzeta}, regularity alone does not suffice to provide us with
explicit formulae for these Euler characteristics.

\begin{rem}
  \quad
  \begin{enumerate}
  \item
    Write $(\cD_0;\bm g) = \func{Simplify}(\cT^0)$.
If $\bm g$  is non-degenerate relative to $\cD_0$ in the sense of \cite[Def.\ 4.2(i)]{topzeta},
then the first stage of Algorithm~\ref{alg:main} will always succeed (see Remark~\ref{r:ET}(\ref{r:ET2})).
If $\bm g$ is even globally non-degenerate (see
\cite[Def.\ 4.2(ii)]{topzeta}), then \func{TopologicalZetaFunction} constitutes
an algorithmic version of \cite[Thm\ 6.7]{topzeta} for the class of topological
zeta functions considered here.
However, Algorithm~\ref{alg:main} can do much more:
the reduction (lines~\ref{alg:begin_reduce}--\ref{alg:end_reduce}) and
simplification (line~\ref{alg:simplification_step}) steps allow
it to overcome certain instances of degeneracy.
When it comes to the practical computation of topological subalgebra and
submodule zeta functions, the computations documented in \S\ref{s:app}
demonstrate that Algorithm~\ref{alg:main} substantially extends the 
scope of \cite{topzeta}.
It is however important to note that a possible point of failure remains in
Algorithm~\ref{alg:main}, namely as part of the reduction step in line~\ref{alg:reduce_failed}.
  \item
    The separation of Algorithm~\ref{alg:main} into two stages might seem artificial.
    It is justified by the aforementioned chance of failure of
    Algorithm~\ref{alg:main}
    and the observations that the final evaluation in
    line~\ref{alg:summing_up} is often the most computationally expensive
    step.
  \item
    The first stage of Algorithm~\ref{alg:main} is $p$-adic in nature in the
    sense that it consists entirely of manipulations of $p$-adic
    integrals. In contrast, the second  stage is inherently topological.
    Finding a practically useful $p$-adic version of the second stage and thus
    a practical method for computing associated local zeta functions
    is a natural direction for future research. 
  \end{enumerate}
\end{rem}

\section{Balanced and regular toric data}
\label{s:balanced_and_regular}

\subsection{Background: cones, polytopes, and polynomials}
\label{ss:newton}

The following summary of well-known material is
based upon \cite[\S\S 3.1, 3.3, 4.1]{topzeta}.

\paragraph{Cones.}
A (closed) \emph{cone} in $\RR^n$ is a
set of the form
$\cC = \bigl\{
\omega\in \RR^n : \bil{\phi_1}\omega, \dotsc, \bil{\phi_d}\omega \ge 0
\}$, where $\phi_1,\dotsc,\phi_d \in \RR^n$.
Equivalently, cones in $\RR^n$ are sets 
of the form $$\cone(\varrho_1,\dotsc,\varrho_r) := \Orth \dtimes \varrho_1 + \dotsb + \Orth
\dtimes \varrho_r,$$ where $\varrho_1,\dotsc,\varrho_r\in \RR^n$;
we call the $\varrho_1,\dotsc,\varrho_r$ a system of \emph{generators} of
$\cone(\varrho_1,\dotsc,\varrho_r)$.
A half-open cone as defined in \S\ref{ss:basics} is precisely a set of 
the form $\cC\setminus(\cC_1\cup\dotsb\cup\cC_r)$, where $\cC$ is a cone,
$r\ge 0$, and $\cC_1,\dotsc,\cC_r$ are faces of $\cC$.
In particular, cones are half-open cones.

\paragraph{Polytopes and normal cones.}
We insist that faces of a polytope be non-empty. 
For a non-empty polytope $\cP \subset \RR^n$ and $\omega\in \RR^n$, 
let $\face_\omega(\cP)$ denote the face of $\cP$ where $\bil\blank\omega$
attains its minimum.
If $\cQ \subset \RR^n$ is another non-empty polytope, then $\face_\omega(\cP + \cQ) =
\face_\omega(\cP) + \face_\omega(\cQ)$.
The \emph{normal cone} of a face $\tau \subseteq \cP$ is
the relatively open cone
$\NormalCone_\tau(\cP) = \{ \omega \in \RR^n : \face_\omega(\cP) = \tau \}$
of dimension $n- \dim(\tau)$.
The $\NormalCone_{\tau}(\cP)$ partition $\RR^n$ into relatively open cones.
The normal cones of faces of $\cP + \cQ$ coincide with the non-empty
sets of the form
$\NormalCone_\tau(\cP) \cap \NormalCone_\upsilon(\cQ)$ for faces $\tau \subset
\cP$ and $\upsilon \subset \cQ$,
corresponding to the unique decomposition of a face of $\cP + \cQ$ into a 
sum of the form $\tau + \upsilon$. 

\paragraph{Newton polytopes and initial forms.}
Let $f = \sum_{\alpha\in \ZZ^n} c_\alpha \XX^\alpha \in k[\XX^{\pm 1}]$,
where $c_\alpha\in k$.
The \emph{support} of $f$ is $\supp(f) = \{ \alpha \in \ZZ^n : c_\alpha \not= 0\}$.
The \emph{Newton polytope} $\Newton(f)$ of $f$ is the convex hull
of $\supp(f)$ within $\RR^n$.
For $f,g\in k[\XX^{\pm 1}]$,
we have $\Newton(fg) = \Newton(f) + \Newton(g)$.
For $\omega \in \RR^n$, the \emph{initial form} $\init_\omega(f)$ of $f$ in
the direction $\omega$ is the sum of those $c_\alpha \XX^\alpha$ with
$\alpha \in \supp(f)$ where $\bil \alpha \omega$ attains its minimum.
If $f\not= 0$,
then $\face_\omega(\Newton(f)) = \Newton(\init_\omega(f))$ for $\omega\in
\RR^n$. 
The equivalence classes on $\RR^n$ 
of $\omega \sim \omega' :\iff \init_{\omega}(f) = \init_{\omega'}(f)$
are the normal cones of the faces of $\Newton(f)$.

\subsection{Balanced toric data}
\label{ss:balance}

\begin{defn}
  \label{d:balanced}
  \quad
  \begin{enumerate}
  \item
  \label{d:balanced1}
    Let $\cC_0 \subset \RR^n$ be an arbitrary subset.
    We say that $f\in k[\XX^{\pm 1}]$ is \emph{$\cC_0$-balanced}
    if $\init_\omega(f)$ is constant as $\omega$ ranges over $\cC_0$.
    If $\cC_0\not= \emptyset$, we denote this common initial form by $\init_{\cC_0}(f)$.
  \item
  \label{d:balanced2}
  A toric datum $(\cC_0;f_1,\dotsc,f_r)$ is
  \emph{balanced} if each $f_i$ is $\cC_0$-balanced.
  \end{enumerate}
\end{defn}

\begin{ex}
  \label{ex:balanced}
  Let $n = 2$ and $k$ be arbitrary.
  Define $f_1 = X_1^{-1} - X_2^{-1}$ and $f_2 = X_1^{-2} - X_2^{-2}$.
  Then $(\Orth^2;f_1,f_2)$ is not balanced. For instance, $\init_{(1,0)}(f_1) =
  X_1^{-1}$ but $\init_{(0,1)}(f_1) = -X_2^{-1}$.
  Let $\cC_0 = \{ (\omega_1,\omega_2)\in \Orth^2 : \omega_1 > \omega_2\}$.
  Then $(\cC_0;f_1,f_2)$ is balanced with $\init_{\cC_0}(f_1) = X_1^{-1}$ and
  $\init_{\cC_0}(f_2) = X_1^{-2}$.
\end{ex}

\begin{lemma}
  \label{lem:balanced_crit}
  Let $\cT = (\cC_0;f_1,\dotsc,f_r)$ be a toric datum with $f_1\dotsb
  f_r\not= 0$. Let $\cN = \Newton(f_1\dotsb f_r)$.
  Then $\cT$ is balanced if and only if
  $\cC_0\subset \NormalCone_\tau(\cN)$ for some face $\tau \subset \cN$.
\end{lemma}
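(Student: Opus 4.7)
The plan is to prove both directions by reducing everything to two facts recalled in \S\ref{ss:newton}: first, that $\Newton(f_1\dotsb f_r) = \Newton(f_1) + \dotsb + \Newton(f_r)$ (which requires $f_1\dotsb f_r\neq 0$, as assumed); and second, that for each $f_i$, the relation $\omega \sim \omega' \iff \init_\omega(f_i) = \init_{\omega'}(f_i)$ partitions $\RR^n$ into the normal cones of the faces of $\Newton(f_i)$. These two inputs combine cleanly through the paper's description of the normal-cone decomposition of a Minkowski sum.

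For the ``only if'' direction, I would assume $\cC_0\neq\emptyset$ (the case $\cC_0=\emptyset$ is vacuous on both sides) and use balancedness of each $f_i$: the map $\omega\mapsto\init_\omega(f_i)$ is constant on $\cC_0$, so by the normal-cone characterisation above there is a unique face $\tau_i\subset\Newton(f_i)$ with $\cC_0\subset\NormalCone_{\tau_i}(\Newton(f_i))$. Then $\cC_0\subset\bigcap_i \NormalCone_{\tau_i}(\Newton(f_i))$, and since this intersection is non-empty, the structure theorem for normal cones of Minkowski sums identifies it with $\NormalCone_{\tau_1+\dotsb+\tau_r}(\cN)$. Setting $\tau := \tau_1 + \dotsb + \tau_r$ gives the desired face.

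For the ``if'' direction, suppose $\cC_0\subset\NormalCone_\tau(\cN)$ for some face $\tau$. By the Minkowski-sum description, $\tau$ decomposes uniquely as $\tau = \tau_1 + \dotsb + \tau_r$ with $\tau_i \subset \Newton(f_i)$, and $\NormalCone_\tau(\cN) \subset \NormalCone_{\tau_i}(\Newton(f_i))$ for each $i$. By the normal-cone/initial-form correspondence, $\init_\omega(f_i)$ is then constant as $\omega$ ranges over $\NormalCone_{\tau_i}(\Newton(f_i))$, and in particular over $\cC_0$. Hence each $f_i$ is $\cC_0$-balanced, so $\cT$ is balanced.

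I do not anticipate any genuine obstacle: the proof is essentially a translation between initial forms and normal cones, with the Minkowski-sum compatibility doing all the work of linking the individual $f_i$ to the product $f_1\dotsb f_r$. The only point requiring a small comment is the trivial case $\cC_0 = \emptyset$, where both conditions hold vacuously, and the minor verification that the decomposition $\tau = \tau_1 + \dotsb + \tau_r$ is unique and indexed by the individual $\Newton(f_i)$ in precisely the way needed.
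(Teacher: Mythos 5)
Your proof is correct and takes essentially the same route as the paper's: both directions reduce to the normal-cone/initial-form correspondence for each $\Newton(f_i)$ together with the Minkowski-sum description of faces and normal cones of $\cN = \sum_i \Newton(f_i)$, with the decomposition $\tau = \tau_1 + \dotsb + \tau_r$ doing the linking in both directions exactly as in the paper.
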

\begin{proof}
  We may assume that $\cC_0 \not= \emptyset$.
  Suppose that $\cT$ is balanced.
  Then, for $1\le i\le r$, since $f_i$ is $\cC_0$-balanced, there exists a face
  $\tau_i\subset \Newton(f_i)$ with $\cC_0 \subset
  \NormalCone_{\tau_i}(\Newton(f_i))$.
  Hence, $\emptyset\not= \cC_0 \subset \bigcap_{i = 1}^r
  \NormalCone_{\tau_i}(\Newton(f_i))$ and 
  $\tau := \sum_{i=1}^r \tau_i$ is the desired face 
  of $\sum_{i=1}^r \Newton(f_i) = \cN$.
  Conversely, let $\cC_0 \subset \NormalCone_\tau(\cN)$ and
  write $\tau = \sum_{i=1}^r \tau_i$ for faces $\tau_i \subset\Newton(f_i)$.
  Then $\cC_0 \subset \NormalCone_\tau(\cN) \subset
  \NormalCone_{\tau_i}(\Newton(f_i))$ for $1\le i\le r$, whence $\cT$ is balanced.
\end{proof}

The following notion will be used to show that 
Algorithm~\ref{alg:main} preserves (\ref{LOOPINV}) from p.\!~\pageref{LOOPINV}.
\begin{defn}
  \label{d:partition}
  Let $\cT$ be a toric datum.
  A finite collection $\mathfrak C$ of toric data 
  is a \emph{partition} of $\cT$ if there exists a
  finite $S\subset\Spec(\fo)$ such that 
  if $K\supset k$ is a $p$-adic field with $\fo\cap\fP_K\not\in S$, then
  $\cT_K = \bigcup_{\cT'\in \mathfrak C} \cT'_K$ (disjoint).
\end{defn}

We may thus rephrase (\ref{LOOPINV}) by stating that
$\id{unprocessed}\sqcup\id{regular}$ is a partition of $\cT^0$.

\paragraph{The function \textrm{\func{Balance}}.}
Given a toric datum $(\cC_0;f_1,\dotsc,f_r)$, the function
\func{Balance} produces a partition consisting of balanced toric
data as follows.
Let $I = \{i: f_i\not= 0\}$ and $\cN = \Newton(\prod_{i\in
  I}f_i)$.
Then let $\func{Balance}(\cC_0;f_1,\dotsc,f_r)$ return the collection
of $\bigl(\cC_0\cap\NormalCone_\tau(\cN);f_1,\dotsc,f_r\bigr)$ for faces $\tau
\subset\cN$ with $\cC_0\cap\NormalCone_\tau(\cN)\not= \emptyset$.
Note that each toric datum
$\bigl(\cC_0\cap\NormalCone_\tau(\cN);f_1,\dotsc,f_r\bigr)$ is balanced by Lemma~\ref{lem:balanced_crit}.

\subsection{Regular toric data}
\label{ss:regular}

Let $\bar k$ be an algebraic closure of $k$.

\begin{defn}
  \label{d:regular}
  We say that a balanced toric datum $(\cC_0;f_1,\dotsc,f_r)$ 
  is \emph{regular} if either $\cC_0 =
  \emptyset$ or the following condition is satisfied:
  \begin{itemize}
  \item[]
  For all $J \subset\{1,\dotsc,r\}$,
  if $\uu\in \Torus^n(\bar k)$ satisfies $\init_{\cC_0}(f_j)(\uu) = 0$ for all $j\in J$,
  then the Jacobian matrix $\Bigl[ \frac{\partial\!\init_{\cC_0}(f_j)}{\partial
    X_i}(\uu)\Bigr]_{i=1,\dotsc,n;j\in J}$ has rank $\card J$.
\end{itemize}
We say that $(\cC_0;f_1,\dotsc,f_r)$ is \emph{singular} if it is balanced but not regular.
\end{defn}

\begin{ex}
  \label{ex:regular}
  The toric datum $(\cC_0;f_1,f_2)$ in Example~\ref{ex:balanced} is trivially
  regular since both initial forms $\init_{\cC_0}(f_1)$ and $\init_{\cC_0}(f_2)$
  are Laurent monomials and hence do not vanish on $\Torus^2(\bar k)$.
  Let $\cC_0' = \{ (\omega_1,\omega_2)\in \Orth^2 : \omega_1 = \omega_2\}$.
  Then $(\cC_0';f_1,f_2)$ is balanced but singular.
  Indeed, the initial forms are $\init_{\cC_0'}(f_1) = f_1$ and $\init_{\cC_0'}(f_2) =
  f_2$ and the condition in Definition~\ref{d:regular} is violated  on the
  subvariety of $\Torus^2_k$ defined by $X_1 = X_2$ for $J = \{1,2\}$.
\end{ex}

By definition, a balanced toric datum $(\cC_0;\ff)$ with $0\not\in \ff$
is regular if and only if $\ff$ is non-degenerate relative to $\cC_0$
in the sense of \cite[Def.\ 4.2(i)]{topzeta}.
More generally, the following holds by construction.
\begin{lemma}
  \label{prop:nd_vs_regular}
  Given 
  $(\cC_0;\ff)$ with $0\not\in\ff$, the family $\ff$ is non-degenerate relative to $\cC_0$ in the sense of
  \cite[Def.\ 4.2(i)]{topzeta} if and only if each element of
  $\func{Balance}(\cC_0;\ff)$ is regular.
  \qed
\end{lemma}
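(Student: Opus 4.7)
The plan is to verify that both notions are defined pointwise in $\omega \in \cC_0$ in terms of the initial forms $\init_\omega(f_j)$, and that the decomposition produced by \func{Balance} is exactly the coarsest decomposition of $\cC_0$ on which all these initial forms are simultaneously constant. Once this is in place, the equivalence is a direct translation between the two definitions.

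First I would unwind \cite[Def.\ 4.2(i)]{topzeta}: non-degeneracy of $\ff$ relative to $\cC_0$ asserts that for \emph{every} $\omega \in \cC_0$ and every $J\subset\{1,\dotsc,r\}$ (necessarily with $f_j\not=0$ for $j\in J$, which is automatic since $0\notin\ff$), any $\uu\in\Torus^n(\bar k)$ with $\init_\omega(f_j)(\uu) = 0$ for all $j\in J$ satisfies that the Jacobian matrix $\bigl[\partial \init_\omega(f_j)/\partial X_i(\uu)\bigr]$ has rank $|J|$. This is the same pointwise Jacobian criterion as in Definition~\ref{d:regular}, but imposed individually at each $\omega \in \cC_0$ rather than using a single common initial form per $f_j$.

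Next I would recall that $\func{Balance}(\cC_0;\ff)$ consists of the nontrivial toric data $(\cC_0\cap\NormalCone_\tau(\cN);\ff)$ as $\tau$ ranges over faces of $\cN = \Newton(\prod_{i\in I} f_i)$, where $I = \{i:f_i\ne 0\}$. Since the normal cones $\NormalCone_\tau(\cN)$ partition $\RR^n$, the pieces $\cC_0\cap\NormalCone_\tau(\cN)$ partition $\cC_0$. By the additivity $\face_\omega(\cN) = \sum_{i\in I}\face_\omega(\Newton(f_i))$ recalled in \S\ref{ss:newton}, for any $\omega$ in a fixed normal cone $\NormalCone_\tau(\cN)$ each $\init_\omega(f_i)$ ($i\in I$) is constant and, restricted to the piece $\cC_0\cap\NormalCone_\tau(\cN)$, equals $\init_{\cC_0\cap\NormalCone_\tau(\cN)}(f_i)$ in the sense of Definition~\ref{d:balanced}(\ref{d:balanced1}). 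Thus each piece is indeed balanced (in line with Lemma~\ref{lem:balanced_crit}), and the initial form appearing in the regularity condition of Definition~\ref{d:regular} for the piece coincides with $\init_\omega(f_j)$ for every $\omega$ in that piece.

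Combining these two observations would conclude the proof: testing the Jacobian criterion at all $\omega\in\cC_0$ is equivalent to testing it on each piece $\cC_0\cap\NormalCone_\tau(\cN)$ separately, and on such a piece the initial forms are rigid, so the condition becomes precisely regularity of the toric datum $(\cC_0\cap\NormalCone_\tau(\cN);\ff)$. There is no real obstacle here — the only thing to watch is that the $f_i$ with $f_i = 0$ contribute no meaningful constraint on either side (so they can be dropped from $\ff$ without affecting either notion), which is why the paper is content to close the argument with \qed.
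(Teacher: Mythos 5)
Your proof is correct and fills in exactly the argument the paper compresses to ``holds by construction'': unwind \cite[Def.\ 4.2(i)]{topzeta} as a pointwise Jacobian condition over $\omega\in\cC_0$, note that \func{Balance} partitions $\cC_0$ along the normal fan of $\cN$, and observe that on each piece the initial forms $\init_\omega(f_i)$ are constant and equal to $\init_{\cC_0\cap\NormalCone_\tau(\cN)}(f_i)$, so that the pointwise condition reduces to the regularity of that piece. The paper offers no explicit proof here, and your write-up is the natural and intended one.
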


\paragraph{Testing regularity.}
As a part of Algorithm~\ref{alg:main}, we need to test regularity of toric data.
This can be carried out using Gr\"obner bases computations as follows.
Let $(\cC_0;f_1,\dotsc,f_r)$ be a balanced toric datum with
$\cC_0 \not= \emptyset$.
Write $g_i = \init_{\cC_0}(f_i)$ and $M_J = \Bigl[ \frac{\partial
  g_j}{\partial X_i}\Bigr]_{i=1,\dotsc,n;j\in J}$.
By the weak Nullstellensatz,
$(\cC_0;\ff)$ is regular if and only if for each $J\subset \{ 1,\dotsc,r\}$,
the Laurent polynomials $g_j$ for $j\in J$ together with the $\card J\times
\card J$-minors of $M_J$ generate the unit ideal of $k[\XX^{\pm 1}]$.
For practical computations, it is convenient to rephrase the latter condition in
terms of the polynomial algebra $k[\XX]$. 
Thus, since regularity of $(\cC_0;f_1,\dotsc,f_r)$ is invariant under rescaling
of the $f_i$ by Laurent monomials (cf.\ \cite[Rem.\ 4.3(ii)]{topzeta}),
we may assume that $g_1,\dotsc,g_r \in k[\XX]$ are polynomials. 
It follows that $(\cC_0;f_1,\dotsc,f_r)$ is regular if and only if 
for all $J$, the monomial $X_1\dotsb X_n$ is contained in the radical of the
ideal generated by all $g_j$ ($j\in J$) and the $\card J\times \card J$-minors
of $M_J$ within $k[\XX]$.
Using the Rabinowitsch trick, 
the latter condition can now be tested using Gr\"obner bases machinery.

\subsection{Reminder: generating functions of cones}
\label{ss:genfun}

The following material is largely well-known, see e.g.\ \cite[Ch.\ 13]{Bar08}
and \cite[\S 4.5]{Sta12}. 

\paragraph{Generating functions.}
Given a cone $\cC \subset \RR^n$ and a ring $R$, let $R[\cC_0\cap\ZZ^n]$ be
the \mbox{$R$-subalgebra} of $R[\XX^{\pm 1}]$ spanned by $\XX^\alpha$ with $\alpha\in \cC\cap\ZZ^n$. 
If $\cC \subset \Orth^n$ is a \itemph{rational} cone,
then, within the field of fractions of $\QQ\llb
\XX\rrb$, the series $\sum_{\omega\in \cC\cap \NN_0^n}
\XX^\omega \in \QQ\llb \XX\rrb$ is given by a rational function $\genfun{\cC}$
of the form $\genfun{\cC} = f(\XX)/\prod_{i=1}^r(1-\XX^{\alpha_i})$,
where $f(\XX)\in \ZZ[\cC\cap \ZZ^n]$ 
and $\alpha_1,\dotsc,\alpha_r\in \cC \cap \NN_0^n$.
For an analytic characterisation of $\genfun{\cC}$, 
let $\cC = \cone(\varrho_1,\dotsc,\varrho_e)$ for
$0\not= \varrho_j\in \NN^n_0$.
Then $\ConeRegion(\cC) := \{ \xx\in \Torus^n(\CC) : \abs{\xx^{\varrho_j}} < 1
\text{ for } j=1,\dotsc,e\}$ is a non-empty open set which is independent of
the choice of $\varrho_1,\dotsc,\varrho_e$, and we have
$\genfun{\cC}(\xx) = \sum_{\omega\in\cC\cap \NN_0^n} \xx^{\omega}$ for all
$\xx\in\ConeRegion(\cC)$, the convergence being absolute and compact on $\ConeRegion(\cC)$.

\paragraph{Triangulation.}
The function $\genfun{\cC}$  can be computed in terms of a triangulation of $\cC$.
Here, by a \emph{triangulation} of $\cC$, we mean a rational polyhedral fan $\cF$ in
$\RR^n$ which consists of simplicial cones and whose support is $\cC$.
By the inclusion-exclusion principle, we may write
$\genfun{\cC}$ as a $\ZZ$-linear combination of the rational functions
$\genfun{\sigma}$ for $\sigma\in \cF$; those $\sigma\in \cF$
with $\dim(\sigma) = \dim(\cC)$ have coefficient $1$.
If $\sigma$ is simplicial, say $\sigma = \cone(\beta_1,\dotsc,\beta_d)$ with $d
= \dim(\sigma)$, then $\genfun{\sigma} = (\sum
  \XX^\alpha)/\prod_{i=1}^d(1-\XX^{\beta_i})$, where the summation in the
numerator extends over the lattice points in the half-open parallelepiped 
$\{ \sum_{i=1}^d a_i \beta_i : a_i\in \RR, 0\le a_i < 1\}$.
Using the inclusion-exclusion principle once again, what has been said about
closed cones above extends to (rational) half-open cones $\cC_0\subset
\Orth^n$, see \cite[\S 3.1]{topzeta}; in particular, we obtain a rational function
$\genfun{\cC_0}$ enumerating the lattice points in $\cC_0$. 

\subsection{Local zeta functions associated with regular toric data}
\label{ss:eval_regular}

We record how the machinery developed in \cite{topzeta} 
provides convex-geometric formulae for local zeta functions associated with
regular toric data.

\paragraph{Monomial substitutions.} (Cf.\ \cite[\S 3.2]{topzeta}.)
Let $A\in \Mat_{n\times(m+1)}(\NN_0)$. We assume that the first column of $A$ is $(1,\dotsc,1)^\top$.
Write $A_1,\dotsc,A_n$ for the rows of $A$.
Let $\XX = (X_1,\dotsc,X_n)$ and $\YY = (Y_0,\dotsc,Y_m)$ consist of independent
variables over $\QQ$.
Let $\cA$ be the $\QQ$-subalgebra of $\QQ(\XX)$ generated by $\QQ[\XX]$ and all
$(1-\XX^\alpha)^{-1}$ for $0\not= \alpha\in \NN_0^n$;
similarly, let $\cB$ be the $\QQ$-algebra generated by $\QQ[\YY]$ and all $(1-\YY^\beta)^{-1}$ for
$0\not= \beta\in \NN_0^{m+1}$.
Then $\XX^\alpha \mapsto \YY^{\alpha A}$ extends to a homomorphism $(\blank)^A\colon
\cA \to \cA'$.
In particular, if $\cC_0\subset \Orth^n$ is a rational half-open cone,
then $\genfun{\cC_0}$ belongs to $\cA$ and we may thus consider its image
$\genfun{\cC_0}^A$.
Observe that the rational function $\genfun{\cC_0}^A$ can be evaluated at any
point $(y_0,\dotsc,y_m) \in \CC^{m+1}$ with $0 \le \abs{y_0} < 1$ and $0 \le \abs{y_j}
\le 1$  for $j=1,\dotsc,m$.

\paragraph{Setup.}
Let $(\cC_0;f_1,\dotsc,f_r)$ be a regular toric datum over $k$.
We assume that $\cC_0 \not= \emptyset$.
For $1\le i\le r$, we choose an arbitrary element $\gamma_i\in \supp(\init_{\cC_0}(f_i))$.
Let $J\subset \{ 1,\dotsc,r\}$.
Define $V_J^\circ$ to be the 
subvariety of $\Torus^n_k$ defined by the vanishing of all $\init_{\cC_0}(f_j)$ for $j\in J$
and the non-vanishing of all remaining $\init_{\cC_0}(f_i)$.
For $j\in J$, let $\delta_{jJ}$ be the $j$th standard basis vector of
$\RR^J$ while for $1\le i\le r$, $i\not\in J$, we let $\delta_{iJ} = 0_{\RR^J}$.
Define $\cD_J$ to be the cone consisting of those $(\xi,o) \in \Orth^n \!\times\! \Orth^J$
with $\bil{\gamma_i}\xi + \bil{\delta_{iJ}}o \ge 0$ for $i=1,\dotsc,r$.
Let $\cC_0^J := (\cC_0^{\phantom *}\times\StrictOrth^J) \cap \cD_J$.
Finally, for an $m\times n$ matrix $\beta$ with rows $\beta_1,\dotsc,\beta_m$,
define an $(n+\card J)\times(m+1)$ matrix
$
A_J(\beta) =
 [(1,\dotsc,1)^\top,(\beta_1,0)^\top,\dotsc,(\beta_m,0)^\top].
$

\begin{thm}[{Cf.\ \cite[Thm~4.10]{topzeta}}]
  \label{thm:regular_padic}
  Let $\cT = (\cC_0;f_1,\dotsc,f_r)$ be a regular toric datum over $k$.
  Let $\beta \in \Mat_{m\times n}(\NN_0)$.
  Define $V_J^\circ$, $\cC_0^J$, and $A_J(\beta)$ as above.
  Then for all $p$-adic fields $K\supset k$,
  unless $\fp_K = \fo\cap\fP_K$ belongs to some finite exceptional
  set (depending only on $\cT$), we have
  {
    \small
  \[
  \Zeta_K^{\cT,\beta}(s_1,\dotsc,s_m)
  =
  \!\!\!\!
  \sum_{J\subset\{1,\dotsc,r\}}
  \!\!\!\!
  \noof{\bar V_J^\circ(\fO_K/\fP_K)} \dtimes
  \frac{(q_K-1)^{\card J}}{q_K^n} \dtimes
  \genfun{\cC_0^J}^{A_J(\beta)}
  (q_K^{-1},q_K^{-s_1},\dotsc,q_K^{-s_m})
  \]
  }
  for $s_1,\dotsc,s_m\in \CC$ with $\Real(s_j)\ge 0$, where $\bar\dtimes$
  denotes reduction modulo $\fp_K$.
\end{thm}

\section{Topological zeta functions and regular toric data}
\label{s:topreg}

We describe the function
\func{EvaluateTopologically} (see \S\ref{ss:EvaluateTopologically}) which
computes the topological zeta function
$\Zeta^{\cT,\beta}_{\topo}(\ess_1,\dotsc,\ess_m)$ associated with a regular
toric datum $\cT$ and a matrix $\beta$ as defined in \S\ref{ss:toptcc}.
Our method is based on refined and algorithmic versions of the key ingredients
of \cite[Thm~6.7]{topzeta} applied to Theorem~\ref{thm:regular_padic}, with
further extensions removing the assumptions of ``global non-degeneracy'' from
\cite[\S 6]{topzeta}.

\subsection{Reminder: topological zeta functions via \texorpdfstring{$p$}{p}-adic formulae} 
\label{ss:M}

In \S\ref{ss:assoczeta}, we sketched how formulae of the
form \eqref{eq:basic_denef} can be used to read off the associated topological
zeta function \eqref{eq:basic_top}.
For a rigorous treatment,
we now recall the formalism of \cite{topzeta} which is based on 
work of Denef and Loeser \cite{DL92}.
In particular, we recall the technical conditions regarding 
the $W_i$ in \eqref{eq:basic_top} alluded to above. 

\paragraph{Formal binomial expansions.}
Let $\qq,\tee_1,\dotsc,\tee_m,\ess_1,\dotsc,\ess_m$ be algebraically independent
over $\QQ$;
we regard these variables as symbolic versions of
$q_K^{\phantom{s_1}},q_K^{-s_1},\dotsc,q_K^{-s_m},s_1,\dotsc,s_m$ in Theorem~\ref{thm:regular_padic}. 
Using the binomial series,
we define
$  \qq^{-\ess_j} 
:= \sum_{d=0}^\infty \binom{-\ess_j}{d} (\qq-1)^d
  \in \QQ[\ess_j]\llb \qq-1\rrb$.
Let $W(\qq,\tee_1,\dotsc,\tee_m) \in \QQ(\qq,\tee_1,\dotsc,\tee_m)$
be of the form 
\begin{equation}
  \label{eq:Wshape}
  W(\qq,\tee_1,\dotsc,\tee_m) =
  \frac{f(\qq,\tee_1,\dotsc,\tee_m)}
  {\prod_{i=1}^r (\qq^{a_i}\tee^{b_i}-1)},
\end{equation}
where $f(\qq,\tee_1,\dotsc,\tee_m)\in \QQ[\qq^{\pm 1},\tee_1^{\pm
  1},\dotsc,\tee^{\pm 1}_m]$, $(a_i,b_i)\in \ZZ^{1+m}$, $(a_i,b_i)\not= (0,0)$.
Given $W(\qq,\tee_1,\dotsc,\tee_m)$, 
we obtain $W(\qq,\qq^{-\ess_1},\dotsc,\qq^{-\ess_m}) \in
\QQ(\ess_1,\dotsc,\ess_m)\llp \qq-1\rrp$. 

\clearpage

\begin{notation}
  \label{not:MM}
  \quad
  \begin{enumerate}
    \item
      \label{not:MM1}
      Let $\MM$ be the $\QQ$-subalgebra of $\QQ(\qq,\tee_1,\dotsc,\tee_m)$
      consisting of those $W(\qq,\tee_1,\dotsc,\tee_m)$ of the form \eqref{eq:Wshape}
      with
      $W(\qq,\qq^{-\ess_1},\dotsc,\qq^{-\ess_m})\in \QQ(\ess_1,\dotsc,\ess_m)\llb \qq-1\rrb$.
    \item
      \label{not:MM2}
      Given $W(\qq,\tee_1,\dotsc,\tee_m) \in \MM$,
      we let $\red{W}(\ess_1,\dotsc,\ess_m) \in \QQ(\ess_1,\dotsc,\ess_m)$
      denote the constant term of $W(\qq,\qq^{-\ess_1},\dotsc,\qq^{-\ess_m})$ as
      a series in $\qq-1$.
    \end{enumerate}
\end{notation}

The following generalises topological zeta functions of polynomials
introduced in \cite{DL92}.

\begin{thmdef}[{\cite[\S 5.3]{topzeta}}]
  \label{thmdef:top}
  Let $\Zeta = (\Zeta_K)$ 
  be a family of rational
  functions $\Zeta_K(\tee_1,\dotsc,\tee_m) \in \QQ(\tee_1,\dotsc,\tee_m)$
  indexed by $p$-adic fields $K\supset k$ (up to $k$-isomorphism).
  Suppose that there exists a finite family of $k$-varieties $V_i$ and 
  $W_i(\qq,\tee_1,\dotsc,\tee_m)\in \MM$ ($i\in I$) such that for $p$-adic
  fields $K\supset k$,
  unless  $\fp_K = \fo\cap\fP_K$ belongs to some
  finite 
  set,
  \[
  \Zeta_K(\tee_1,\dotsc,\tee_m) = \sum_{i\in I} \noof{\bar V_i(\fO_K/\fP_K)} \dtimes
  W_i(q_K,\tee_1,\dotsc,\tee_m),
  \]
  where $\bar\dtimes$ denotes reduction modulo $\fp_K$.
  Then the \emph{topological zeta function}
  \[
  \Zeta_{\topo}(\ess_1,\dotsc,\ess_m) :=
  \sum_{i\in I}
  \Euler(V_i(\CC)) \dtimes \red{W_i}(\ess_1,\dotsc,\ess_m) \in \QQ(\ess_1,\dotsc,\ess_m)
  \]
  associated with $\Zeta$ 
  is independent of the choice of $(V_i,W_i(\qq,\tee_1,\dotsc,\tee_m))_{i\in I}$.
\end{thmdef}

When $m = 1$, we write $\ess$ and $\tee$ instead of $\ess_1$ and $\tee_1$.
The formulae for local subalgebra and submodule zeta
functions in \cite{dSG00} give rise to
associated topological zeta functions.

\begin{thmdef}[{\cite[\S 5.4]{topzeta}}]
  \label{thmdef:topsubzeta}
  \quad
  \begin{enumerate}
  \item
    Let $\cA$ be a non-associative $\fo$-algebra whose underlying
    $\fo$-module is free of finite rank $d$.
    For a $p$-adic field $K\supset k$, let
    $\Zeta_{\cA,K}(\tee)\in \QQ(\tee)$  be the rational function
    with $$\Zeta_{\cA,K}(q_K^{-s}) = (1-q_K^{-1})^d \dtimes \zeta_{\cA\otimes_{\fo}\fO_K}(s).$$
    Then $\Zeta_{\cA} := (\Zeta_{\cA,K})$ satisfies the assumptions in
    Theorem\ \&\ Definition~\ref{thmdef:top}.

    The \emph{topological subalgebra zeta function} of $\cA$ is
    $$\zeta_{\cA,\topo}(\ess) := \Zeta_{\cA,\topo}(\ess)\in \QQ(\ess).$$ 
  \item
    Let $M$ be a free $\fo$-module of rank $d$ and let $\cE$ be a
    subalgebra of $\End_{\fo}(M)$.
    For a $p$-adic field $K\supset k$,
    let
    $\Zeta_{\cE\acts M,K}(\tee)\in \QQ(\tee)$ be the rational function with
    $$\Zeta_{\cE\acts M,K}(q_K^{-s}) = (1-q_K^{-1})^d \dtimes \zeta_{(\cE\otimes_{\fo}\fO_K)
    \acts (M\otimes_{\fo}\fO_K)}(s).$$
    Then $\Zeta_{\cE\acts M} := (\Zeta_{\cE \acts M,K})$ satisfies the assumptions in
    Theorem\ \&\ Definition~\ref{thmdef:top}.

    The \emph{topological submodule zeta function} of $\cE$ acting on $M$ is
     $$\zeta_{\cE\acts M,\topo}(\ess) := \Zeta_{\cE \acts M,\topo}(\ess)\in \QQ(\ess).$$ 
  \end{enumerate}
\end{thmdef}

In view of Definition~\ref{d:subzeta}(\ref{d:subzeta2}),
we define the \emph{topological ideal zeta function} of $\cA$ to be
$\zeta_{\cA,\topo}^\normal(\ess) = \zeta_{\Omega(\cA)\acts A,\topo}(\ess)$.

\begin{rem}
  Topological subalgebra zeta functions were first defined in greater generality
  by du Sautoy and Loeser \cite{dSL04}.
  Their definition of the topological subalgebra zeta function of a $\ZZ$-algebra
  $\cA$ of $\ZZ$-rank $d$ coincides with $d!\dtimes \Zeta_{\cA,\topo}(\ess+d)$ in
  our notation;
  we note that the factor $d!$ (a consequence of 
  \cite[Def.\ 7.2]{dSL04} and the remarks following it) seems to be missing from
  the examples in \cite[\S 9]{dSL04}. 
\end{rem}

The following simple observation will be useful for our computations, see \S\ref{s:imp}.

\begin{lemma}
  \label{lem:degbound}
  Notation as in Theorem\ \&\ Definition~\ref{thmdef:topsubzeta},
  the univariate rational functions
  $\zeta_{\cA,\topo}(\ess)\in\QQ(\ess)$ and $\zeta_{\cE\acts
    M,\topo}(\ess)\in\QQ(\ess)$ both have degree $\le 0$ in $\ess$.
\end{lemma}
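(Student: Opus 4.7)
My approach is to establish the degree bound via a \emph{local regularity} property: both $\Zeta_{\cA,K}(\tee)$ and $\Zeta_{\cE\acts M,K}(\tee)$, viewed as rational functions of $\tee$, are regular at $\tee=0$ with value $(1-q_K^{-1})^d$. This reflects the fact that the coefficient $a_1$ in the defining Dirichlet series is $1$, and I would verify it directly from the integral formula of Theorem~\ref{thm:coneint}. Indeed, as $s\to\infty$ the integrand $\prod_j\abs{x_{jj}}_K^{s-j}$ converges pointwise to the indicator function of $\Tr_d(\fO_K)^\times$; on this set the side constraint $\norm{\ff(\xx)}_K\le 1$ is automatic, because the construction at the end of \S\ref{s:background_gam} expresses $\ff$ as the entries of $\beta(C_m,C_n)C^{-1}$ in the algebra case, or of $CMC^{-1}$ in the module case, both of which lie in $\fO_K^d$ whenever $C\in\Tr_d(\fO_K)^\times$. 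Dominated convergence (the integrand is bounded by $1$ for $s\ge d$) then yields the claim.

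\textbf{Transfer to the topological side.} The second ingredient is a general transfer lemma: if $W(\qq,\tee)\in\MM$ has $W(\qq,0)\in\QQ(\qq)$ defined and regular at $\qq=1$, then $\red{W}(\ess)\in\QQ(\ess)$ has degree $\le 0$ in $\ess$. I would prove this by decomposing $W(\qq,\tee) = W(\qq,0) + \tee\,W_1(\qq,\tee)$ with $W_1\in\MM$---checking the latter via the fact that $\qq^{-\ess}$ is a unit in $\QQ(\ess)\llb\qq-1\rrb$ with constant term $1$---and exploiting that $\red{\,\cdot\,}\colon\MM\to\QQ(\ess)$ is a $\QQ$-algebra homomorphism with $\red{\qq^{-\ess}}=1$, yielding the recursion $\red{W}(\ess) = W(1,0) + \red{W_1}(\ess)$ and, in the limit, a finite value at $\ess\to\infty$. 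Applied to a Denef-type decomposition $\Zeta_{\cA,K}(\tee) = \sum_i\noof{V_i(k_K)}\dtimes W_i(q_K,\tee)$ with $W_i\in\MM$ provided by Theorem \& Definition~\ref{thmdef:topsubzeta}, this yields $\deg_\ess\zeta_{\cA,\topo}\le 0$ termwise, and analogously for $\zeta_{\cE\acts M,\topo}$.

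\textbf{Main obstacle.} The delicate point is that in an arbitrary Denef-type decomposition the individual $W_i$ need not satisfy the hypothesis of the transfer lemma, even though their sum is regular at $\tee=0$. I would surmount this by choosing the decomposition compatibly with the convex-geometric formula of Theorem~\ref{thm:regular_padic}: there each $W_i$ takes the form $\tfrac{(\qq-1)^{\card J}}{\qq^n}\dtimes\genfun{\cC_0^J}^{A_J(\beta)}$, and under the univariate specialisation $\ess_j=\ess-j$ each denominator factor $1-\YY^{\varrho A_J(\beta)}$ becomes of the form $1-\qq^{a-b\ess}$ with $b\ge 0$, because $\cC_0^J\subset\Orth^{n+\card J}$ has non-negative generators $\varrho$ while the first column of $A_J(\beta)$ is $(1,\dotsc,1)^\top$ and the remaining columns $(\beta_k,0)^\top$ are non-negative. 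This non-negativity forces regularity of each summand at $\tee=0$ and thus reduces the proof to a direct application of the transfer lemma; summing over $J$ yields the desired bound $\deg_\ess\le 0$.
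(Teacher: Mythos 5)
Your overall idea—reduce to a suitable ``explicit formula'' decomposition $\Zeta_{\cA,K}(\tee)=\sum_i\noof{\bar V_i}\dtimes W_i(q_K,\tee)$ in which each $W_i$ can be analysed individually—is correct and is what the paper's proof does. But your proposal has a genuine gap, and it is exactly the one you flag in your ``main obstacle'' paragraph without actually resolving it. Theorem~\ref{thm:regular_padic} only applies to a \emph{regular} toric datum. The toric data arising from an arbitrary $\cA$ (or $\cE\acts M$) via Theorem~\ref{thm:coneint} and Remark~\ref{r:special} are typically \emph{not} regular---indeed the failure of non-degeneracy is the very phenomenon this paper is built around---and Lemma~\ref{lem:degbound} is stated unconditionally. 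Nothing in the lemma assumes that Stage~I of Algorithm~\ref{alg:main} terminates successfully, so you cannot invoke a decomposition into regular pieces. The paper avoids this entirely by citing the cone-integral formulae of du~Sautoy--Grunewald \cite[Cor.~3.2]{dSG00} and their topological counterpart \cite[Prop.~8.4]{dSL04}, which are obtained unconditionally via resolution of singularities and already present the topological zeta function as a $\ZZ$-linear combination of rational functions of non-positive degree in $\ess$.

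Two further points. First, if the decomposition from Theorem~\ref{thm:regular_padic} \emph{were} available, the last sentence of your third paragraph (non-negativity of the generators of $\cC_0^J$ and of $A_J(\beta)$, hence denominator factors $1-\qq^{a-b\ess}$ with $a,b\ge 0$) would yield $\deg_\ess\red{W_J}\le 0$ directly via Lemma~\ref{lem:compute_reduction}---so your steps~1 and~2 would be unnecessary. Second, the proof you sketch for the ``transfer lemma'' in step~2 does not go through: writing $W(\qq,\tee)=W(\qq,0)+\tee\,W_1(\qq,\tee)$ gives $\red{W}=W(1,0)+\red{W_1}$, but iterating this only produces the formal power-series expansion of $W$ in $\tee$, which is an infinite process that never yields a degree bound; nothing in the argument explains why the degree of $\red{W_1}$ in $\ess$ is controlled by the regularity of $W(\qq,0)$ at $\qq=1$. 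Step~1, the pointwise-limit / dominated convergence argument showing that $\Zeta_{\cA,K}(\tee)$ is regular at $\tee=0$ with value $(1-q_K^{-1})^d$, is correct (it amounts to $a_1=1$ together with $\abs{\Tr_d(\fO_K)^\times}=(1-q_K^{-1})^d$), but as just explained it plays no role in a proof that actually works.
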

\begin{proof}
  This follows from the explicit formula \cite[Prop.\ 8.4]{dSL04} for topological zeta functions
  associated with ``cone integrals''---or equivalently, the topological counterpart
  of \cite[Cor.~3.2]{dSG00}.
  Indeed, these formulae express the topological zeta functions under consideration
  as $\ZZ$-linear combinations of rational functions of degree $\le 0$ in $\ess$.
\end{proof}

For all examples of topological subalgebra zeta functions known to
the author, the degree of $\zeta_{\cA,\topo}(\ess)$ is precisely $-d$, where $d$
is the $\fo$-rank of $\cA$, see \cite[\S 8, Conj.~I]{topzeta}.

\begin{thmdef}[{\cite[\S 5.4]{topzeta}; \!cf.\!\!\!\! \cite[\S 6]{dSL04}}]
  Let $G$ be a finitely generated torsion-free nilpotent group.
  Let $\fL(G)$ be the associated Lie $\QQ$-algebra under the Mal'cev correspondence.
  Choose an arbitrary $\ZZ$-subalgebra $\cL \!\subset\! \fL(G)$ 
  which is finitely generated as a $\ZZ$-module
  and which spans $\fL(G)$ over $\QQ$.
  The \emph{topological subgroup zeta function} and \emph{topological normal
    subgroup zeta function} of $G$ are
  $\zeta_{G,\topo}(\ess) := \zeta_{\cL,\topo}(\ess)$ and
  $\zeta^\normal_{G,\topo}(\ess) := \zeta^\normal_{\cL,\topo}(\ess)$, respectively.
  These definitions do not depend on the choice of $\cL$.
\end{thmdef}

\subsection{Topological zeta functions associated with toric data}
\label{ss:toptcc}

Let $\cT$ be a toric datum in $n$ variables over $k$ and
let $\beta$ be an $m\times n$ matrix with entries in $\NN_0$.
The following is a special case of general results in $p$-adic
integration following Denef's fundamental paper \cite{Den87},
cf.\ \cite[Rem.\ 4.7]{topzeta} and \cite[Ex.\ 5.11(vi)]{topzeta}.
Thus, for each $p$-adic field $K\supset k$, the zeta function
$\Zeta^{\cT,\beta}_K(s_1,\dotsc,s_m)$ is rational in
$q_K^{-s_1},\dotsc$, $q_K^{-s_m}$.
We may therefore regard each $\Zeta^{\cT,\beta}_K(s_1,\dotsc,s_m)$ as an element
of $\QQ(\tee_1,\dotsc,\tee_m)$ via $\tee_j \mapsto q_K^{-s_j}$.
After this identification, the collection of rational functions
$\Zeta^{\cT,\beta} := (\Zeta^{\cT,\beta}_K)$ satisfies the assumptions in
Theorem\ \&\ Definition~\ref{thmdef:top}.
Consequently, we obtain a topological zeta
function $\Zeta^{\cT,\beta}_{\topo}(\ess_1,\dotsc,\ess_m)\in
\QQ(\ess_1,\dotsc,\ess_m)$ associated with $\cT$ and $\beta$.
\begin{lemma}
  \label{lem:topadd}
  Let $\cT$ and $\beta$ be as above.
  Let $\mathfrak C$ be a partition of $\cT$
  (see Definition~\ref{d:partition}).
  Then $
  \Zeta^{\cT,\beta}_{\topo}(\ess_1,\dotsc,\ess_m) =
  \sum\limits_{\cT'\in \mathfrak C} \Zeta^{\cT', \beta}_{\topo}(\ess_1,\dotsc,\ess_m)$.
  \qed
\end{lemma}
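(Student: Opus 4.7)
The plan is to descend from the topological statement to a statement about $p$-adic integrals, use the additivity of the Haar measure over the disjoint union, and then invoke the well-definedness built into Theorem~\&~Definition~\ref{thmdef:top} to lift back.

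First, fix a finite $S \subset \Spec(\fo)$ witnessing the partition property of $\mathfrak C$ in the sense of Definition~\ref{d:partition}; that is, $\cT_K = \bigsqcup_{\cT' \in \mathfrak C} \cT'_K$ (disjoint) for every $p$-adic field $K \supset k$ with $\fo \cap \fP_K \notin S$. Enlarging $S$ if necessary, I may simultaneously assume that for each $\cT' \in \mathfrak C$ we are given a Denef-type formula
\[
\Zeta^{\cT',\beta}_K(\tee_1,\dotsc,\tee_m) = \sum_{i \in I_{\cT'}} \noof{\bar V_{\cT',i}(\fO_K/\fP_K)} \dtimes W_{\cT',i}(q_K,\tee_1,\dotsc,\tee_m)
\]
with $V_{\cT',i}$ a $k$-variety and $W_{\cT',i}(\qq,\tee_1,\dotsc,\tee_m) \in \MM$, valid for all $K$ with $\fo \cap \fP_K \notin S$; such a formula exists for each $\cT'$ since $\Zeta^{\cT',\beta}$ satisfies the hypotheses of Theorem~\&~Definition~\ref{thmdef:top} as noted in \S\ref{ss:toptcc}.

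Next, by additivity of the Haar measure $\mu_K$ over the finite disjoint decomposition $\cT_K = \bigsqcup_{\cT'} \cT'_K$, the integrals defining $\Zeta^{\cT,\beta}_K$ and the $\Zeta^{\cT',\beta}_K$ in \eqref{eq:basic_integral} satisfy
\[
\Zeta^{\cT,\beta}_K(\tee_1,\dotsc,\tee_m) = \sum_{\cT' \in \mathfrak C} \Zeta^{\cT',\beta}_K(\tee_1,\dotsc,\tee_m)
\]
for all $K$ with $\fo \cap \fP_K \notin S$. Substituting the Denef-type expansions displayed above, the disjoint union of the families $(V_{\cT',i},W_{\cT',i})_{i \in I_{\cT'}}$ over $\cT' \in \mathfrak C$ provides a single formula of the shape required by Theorem~\&~Definition~\ref{thmdef:top} for $\Zeta^{\cT,\beta} = (\Zeta^{\cT,\beta}_K)$, again with exceptional locus contained in $S$.

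Finally, I invoke the independence statement in Theorem~\&~Definition~\ref{thmdef:top}: the topological zeta function $\Zeta^{\cT,\beta}_{\topo}$ is computed from \emph{any} such family. Using the combined family on the left and summing over $\cT'$ on the right yields
\[
\Zeta^{\cT,\beta}_{\topo}(\ess_1,\dotsc,\ess_m) = \sum_{\cT' \in \mathfrak C} \sum_{i \in I_{\cT'}} \Euler(V_{\cT',i}(\CC)) \dtimes \red{W_{\cT',i}}(\ess_1,\dotsc,\ess_m) = \sum_{\cT' \in \mathfrak C} \Zeta^{\cT',\beta}_{\topo}(\ess_1,\dotsc,\ess_m),
\]
as desired. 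No step is genuinely difficult here; the only point requiring care is that the finite exceptional sets attached to the various $\cT'$ and to the partition can be merged into a single finite $S$, and that the Denef-type expansions may simply be concatenated, which is immediate since the relevant families are finite.
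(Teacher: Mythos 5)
Your proof is correct and fills in exactly the argument the paper leaves implicit (the lemma is stated with a bare \textsl{q.e.d.}\ symbol, i.e.\ taken as immediate). The three ingredients you use---additivity of the $p$-adic integral over the finite disjoint decomposition $\cT_K = \bigsqcup_{\cT'} \cT'_K$, concatenation of the finitely many Denef-type families with a merged finite exceptional set $S$, and the well-definedness clause of Theorem~\&~Definition~\ref{thmdef:top}---are precisely what the \S\ref{ss:toptcc} framework is set up to deliver, so this is the intended route rather than a genuinely different one.
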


Hence, if we assume that the invariant (\ref{LOOPINV}) on p.\!~\pageref{LOOPINV}
is preserved by both \func{Simplify} and \func{Reduce} and that
$\func{EvaluateTopologically}(\cT,\beta)$ indeed computes
$\Zeta_{\topo}^{\cT,\beta}(\ess_1,\dotsc,\ess_m)$,
then the correctness of Algorithm~\ref{alg:main} follows from Lemma~\ref{lem:topadd}.

\subsection{Torus factors}
\label{s:torus_factors}
The group $\GL_n(\ZZ)$ admits a natural right-action on $k[\XX^{\pm 1}]$ by
$k$-algebra automorphisms via $(\XX^\alpha)^A :=
\XX^{\alpha A}$ for $\alpha\in \ZZ^n$ and $A\in\GL_n(\ZZ)$.

\begin{lemma}[{\cite[Lem.\ 6.1(i)]{topzeta}}]
  \label{lem:split}
  Let $f_1,\dotsc,f_r\in k[\XX^{\pm 1}]$ be non-zero Laurent polynomials.
  Let $\cN = \Newton(f_1\dotsb f_r)$ and $d = \dim(\cN)$.
  For $1\le i\le r$, choose $\alpha_i\in \supp(f_i)$.
  Then there exists $A\in \GL_n(\ZZ)$
  such that $(\XX^{-\alpha_i}f_i)^A \in k[X_1^{\pm 1},\dotsc,X_d^{\pm 1}]$ for
  $1\le i\le r$.
\end{lemma}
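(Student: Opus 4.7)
The plan is to locate a common rational $d$-dimensional linear subspace $V \subset \RR^n$ containing the support of every $g_i := \XX^{-\alpha_i} f_i$, and then to apply a unimodular change of coordinates sending $V$ to $\RR^d \times \{0\}^{n-d}$.

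First, since $\alpha_i \in \supp(f_i)$, we have $0 \in \supp(g_i)$, so each $\Newton(g_i) = \Newton(f_i) - \alpha_i$ contains the origin. Using the Minkowski identity $\Newton(f_1 \dotsb f_r) = \sum_i \Newton(f_i)$ recalled in \S\ref{ss:newton}, we set
$$P := \sum_{i=1}^r \Newton(g_i) = \cN - \sum_{i=1}^r \alpha_i,$$
so $\dim(P) = \dim(\cN) = d$. Since $0$ belongs to each summand $\Newton(g_j)$, for any $x \in \Newton(g_j)$ we may write $x = 0 + \dotsb + x + \dotsb + 0 \in P$, whence $\Newton(g_j) \subset P$. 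Moreover $0 \in P$, so the affine hull $V$ of $P$ is a linear subspace of dimension $d$. Because $P$ has vertices in $\ZZ^n$, $V$ is a rational subspace.

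It remains to find $A \in \GL_n(\ZZ)$ such that $\alpha A \in \ZZ^d \times \{0\}^{n-d}$ for every $\alpha \in L := V \cap \ZZ^n$. The sublattice $L$ is saturated in $\ZZ^n$ and has rank $d$, so by elementary divisor theory it admits a $\ZZ$-basis that extends to a basis of $\ZZ^n$; equivalently, there is $B \in \GL_n(\ZZ)$ whose first $d$ rows generate $L$. Setting $A := B^{-1}$, any $\alpha \in L$ can be written as $\alpha = c B$ with $c \in \ZZ^d \times \{0\}^{n-d}$, and then $\alpha A = c$ has its last $n - d$ coordinates equal to zero. Since $\supp(g_i) \subset \Newton(g_i) \cap \ZZ^n \subset L$, every monomial of $g_i$ transforms under $\XX^\alpha \mapsto \XX^{\alpha A}$ into one involving only $X_1, \dotsc, X_d$, giving $(\XX^{-\alpha_i} f_i)^A = (g_i)^A \in k[X_1^{\pm 1}, \dotsc, X_d^{\pm 1}]$, as required.

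The only substantive step is the geometric Step of pinning down the common $d$-dimensional linear subspace $V$. Once the Newton polytopes are translated to contain the origin, this reduces to the Minkowski-sum formula together with the trivial observation that a polytope containing $0$ is contained in any Minkowski sum to which it contributes (via trivial choices of the remaining summands); the rest is routine lattice theory.
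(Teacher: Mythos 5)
Your argument is correct, and it is essentially the same approach that the paper (and the cited \cite[Lem.\ 6.1(i)]{topzeta}) uses: translate each $\Newton(f_i)$ to contain the origin, invoke $\Newton(f_1\dotsb f_r)=\sum_i\Newton(f_i)$ to see that the common linear span of the shifted supports is $d$-dimensional, and then perform a unimodular change of coordinates to move that span into the first $d$ coordinates. The only cosmetic difference is at the final step: you pass to the saturated lattice $L=V\cap\ZZ^n$ and extend a basis of $L$ to one of $\ZZ^n$, whereas the paper (in \S\ref{s:torus_factors}) works directly with the $\ZZ$-module $M$ generated by $\bigcup_i\supp(\XX^{-\alpha_i}f_i)$ and applies Smith normal form to a generating matrix of $M$; both routes produce the same kind of $A\in\GL_n(\ZZ)$ and are trivially equivalent. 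Your proof in fact supplies the rank computation that the paper only cites, so it is, if anything, slightly more self-contained.
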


Recall that Theorem~\ref{thm:regular_padic} featured certain explicitly defined
subvarieties of algebraic tori over $k$. 
The relevance of Lemma~\ref{lem:split} is due to the following geometric consequence.
\begin{cor}
  \label{cor:split}
  Write $g_i := (\XX^{-\alpha_i}f_i)^A$ for $1\le i\le r$.
  Let $V$ be the subvariety of $\Torus^n_k$ defined by $f_1 = \dotso = f_r = 0$ and
  let $U$ be the subvariety of $\Torus^d_k$ defined by $g_1 = \dotso = g_r = 0$.
  Then $V \approx_k U\times_{\Spec(k)} \Torus_k^{n-d}$. \qed
\end{cor}

The proof of Lemma~\ref{lem:split} given in \cite{topzeta} easily translates
into an algorithm.
Indeed, let $M$ be the $\ZZ$-submodule of $\ZZ^n$ generated by
$\bigcup_{i=1}^r\supp(\XX^{-\alpha_i} f_i)$.
Then $M$ has rank $d$, see the proof of \cite[Lem.~6.1(i)]{topzeta}.
Let $B$ be any matrix over $\ZZ$ (of size $e\times n$, say) whose rows span $M$
over $\ZZ$.
We may find $C\in \GL_e(\ZZ)$ and $A\in
\GL_n(\ZZ)$ such that $CBA$ is in Smith normal form. 
Evidently, $A$ then satisfies the desired conditions in Lemma~\ref{lem:split}.

\subsection{Rewriting Theorem~\ref{thm:regular_padic}}
\label{ss:rewrite}

In \cite{topzeta}, we used Lemma~\ref{lem:split}
to rewrite the explicit formulae in \cite[Thm~4.10]{topzeta} in a shape
compatible with Theorem \& Definition~\ref{thmdef:top}. 
In order to be able to explicitly compute associated topological zeta functions,
we now consider an algorithmic version of this rewriting process applied to the
formula in Theorem~\ref{thm:regular_padic}.

Let $\cT = (\cC_0;f_1,\dotsc,f_r)$ be a non-trivial regular
toric datum.
Let $\beta$, $V_J^\circ$, $\cC_0^J$, and $A_J(\beta)$ be as in
Theorem~\ref{thm:regular_padic}.
Let $\cN := \Newton(f_1\dotsb f_r)$.
Since $\cT$ is balanced, by Lemma~\ref{lem:balanced_crit}, 
there exists a (unique) face $\tau\subset \cN$ such that $\cC_0\subset
\NormalCone_\tau(\cN)$;
in particular, $\dim(\cC_0) \le n - \dim(\tau)$.
Let $\tau = \tau_1 + \dotsb + \tau_r$ be the decomposition of $\tau$ into faces $\tau_i
\subset \Newton(f_i)$; hence, $\tau_i = \Newton(\init_{\cC_0}(f_i))$
(cf.\ \cite[Lem.\ 6.1(iii)]{topzeta}).

For $J\subset \{1,\dotsc,r\}$, let $V_J$ be the subvariety of $\Torus^n_k$
defined by $\init_{\cC_0}(f_j) = 0$ for all $j\in J$.
Note that using the inclusion-exclusion principle, we may replace 
$\noof{\bar V_J^\circ(\fO_K/\fP_K)}$ in Theorem~\ref{thm:regular_padic} by
$\sum_{J\subset T \subset \{1,\dotsc,r\}} (-1)^{\card T + \card J} \dtimes
\noof{\bar V_T^{\phantom\circ}(\fO_K/\fP_K)}$.

Again, let $J\subset \{1,\dotsc,r\}$. Write $d(J) := \dim\bigl(\sum_{j\in J}\tau_j\bigr)$.
Equivalently, $d(J)$ is the dimension of $\sum_{j\in
  J}\Newton(\init_{\cC_0}(f_j)) = \Newton\bigl(\prod_{j\in J}\init_{\cC_0}(f_j)\bigr)$.
By Lemma~\ref{lem:split},
we may thus construct $B_J\in \GL_n(\ZZ)$ and non-zero $g_j \in k[X_1^{\pm 1},\dotsc,X_{d(J)}^{\pm 1}]$
such that $g_j^{-1}\init_{\cC_0}(f_j)^{B_J}$ is a Laurent monomial for each $j\in J$.
Let $U_J$ be the subvariety of $\Torus^{d(J)}_k$ defined by $g_j = 0$ for all
$j\in J$
so that $V_J \approx_k U_J \times_{\Spec(k)} \Torus_k^{n-d(J)}$ (see Corollary~\ref{cor:split}).
Finally, define
{\small
\begin{equation}
  \label{eq:W}
  W_J(\qq,\tee_1,\dotsc,\tee_m) := 
  \qq^{-n}(\qq-1)^{n-\dim(\tau)+\card J} \dtimes
  \genfun{\cC_0^J}^{A_J(\beta)}(\qq^{-1},\tee_1,\dotsc,\tee_m)
  \in \QQ(\qq,\tee_1,\dotsc,\tee_m).
\end{equation}
}
\begin{prop}
  \label{prop:regular_padic_rewritten}
  Notation as above; in particular, let $\cT = (\cC_0;f_1,\dotsc,f_r)$ be regular.
  For all $p$-adic fields $K\supset k$, unless $\fp = \fo\cap\fP_K$
  belongs to some finite exceptional set, 
  {\footnotesize
    \begin{equation}
      \label{eq:regular_padic_rewritten}
      \Zeta^{\cT{\!, }\beta}_K (s_1,\dotsc,s_m) =
      \sum_J
      \Bigl(
      \sum_{J\subset T}
      (-1)^{\card J + \card T} \dtimes
      \noof{\bar U_T(\fO_K/\fP_K)}
      \dtimes (q_K-1)^{\dim(\tau)-d(T)}
      \Bigr)
      \dtimes
      W_J(q_K^{\phantom{s_1}}, q_K^{-s_1},\dotsc,q_K^{-s_m}),
    \end{equation}
  }
  where $J$ and $T$ range over subsets of $\{1,\dotsc,r\}$
  and $s_1,\dotsc,s_m\in \CC$ with $\Real(s_j) \ge 0$. 
  \qed
\end{prop}

The two crucial features of Proposition~\ref{prop:regular_padic_rewritten}
compared with Theorem~\ref{thm:regular_padic} are
\begin{enumerate}
\item
  the $U_T$ are embedded as {closed} (instead of locally closed)
  subvarieties of tori and
\item
  each $W_J(\qq,\tee_1,\dotsc,\tee_m)$ belongs to the algebra $\MM$
  from \S\ref{ss:M} (Corollary~\ref{cor:WJinM}).
\end{enumerate}

\subsection{Computing formal reductions modulo \texorpdfstring{$\qq-1$}{q-1}}
\label{ss:compute_reduction}

We show that each $W_J$ in Proposition~\ref{prop:regular_padic_rewritten}
belongs to the $\QQ$-algebra $\MM$ from \S\ref{ss:M}.
While this statement alone is merely a special case of
\cite[Lem.\ 6.9(i)]{topzeta}, the proof given here provides
an algorithm for computing $\red{W}(\ess_1,\dotsc,\ess_m)$ (see Notation~\ref{not:MM}(\ref{not:MM2})).

\begin{lemma}
  \label{lem:compute_reduction}
  Let $\cB_0\subset \Orth^r$ be a non-empty rational half-open cone of dimension
  $d$.
  Let $A$ be an $r\times(m+1)$-matrix with entries in $\NN_0$ and suppose that
  the first column of $A$ is $(1,\dotsc,1)^\top$.
  Then $W(\qq,\tee_1,\dotsc,\tee_m) := (\qq-1)^d
  \genfun{\cB_0}^A(\qq^{-1},\tee_1,\dotsc,\tee_m)$ belongs to the algebra $\MM$
  from \S\ref{ss:M}.
  Moreover,
  $-d \le \deg_{\ess_j}\bigl(\red{W}(\ess_1,\dotsc,\ess_m)\bigr) \le 0$ for $j = 1,\dotsc, m$.
\end{lemma}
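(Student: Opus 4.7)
The plan is to reduce to simplicial cones and then expand everything explicitly in powers of $\qq-1$. First, using the triangulation machinery summarised in \S\ref{ss:genfun}, I would pick a triangulation of $\cB_0$ and write $\genfun{\cB_0}$ as a $\ZZ$-linear combination of generating functions $\genfun{\sigma}$ for simplicial (possibly half-open) cones $\sigma$ of dimension $d' \le d$. For such a cone with generators $\beta_1,\dotsc,\beta_{d'} \in \NN_0^r\setminus\{0\}$, we have $\genfun{\sigma}(\XX) = P_\sigma(\XX)/\prod_{i=1}^{d'}(1-\XX^{\beta_i})$ for an explicit Laurent polynomial $P_\sigma$ (counting lattice points in a fundamental parallelepiped). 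Applying the monomial substitution $(\blank)^A$ and then setting $Y_0 = \qq^{-1}$, $Y_j = \tee_j$, each denominator factor $1-\XX^{\beta_i}$ becomes $1 - \qq^{-a_{\sigma,i}}\tee^{b_{\sigma,i}}$ where $a_{\sigma,i} = |\beta_i|_1 \ge 1$ (the crucial point being that the first column of $A$ is $(1,\dotsc,1)^\top$) and $b_{\sigma,i} \in \NN_0^m$. This shows in particular that $W$ has the shape \eqref{eq:Wshape}.

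Next, upon specialising $\tee_j = \qq^{-\ess_j}$, each factor expands as $1 - \qq^{-a_{\sigma,i} - b_{\sigma,i}\cdot\ess} = (a_{\sigma,i} + b_{\sigma,i}\cdot\ess)(\qq-1) + O((\qq-1)^2)$, a simple zero at $\qq=1$ whose leading coefficient $a_{\sigma,i} + b_{\sigma,i}\cdot\ess$ is nonzero in $\QQ(\ess)$ because $a_{\sigma,i} \ge 1$. Consequently $\genfun{\sigma}^A(\qq^{-1},\qq^{-\ess})$ is a Laurent series in $\qq-1$ with a pole of order at most $d'$, so that $(\qq-1)^d \genfun{\sigma}^A(\qq^{-1},\qq^{-\ess}) \in \QQ(\ess)\llb \qq-1\rrb$. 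Summing over the contributions of the triangulation yields $W(\qq,\qq^{-\ess_1},\dotsc,\qq^{-\ess_m}) \in \QQ(\ess)\llb\qq-1\rrb$, establishing $W \in \MM$.

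To read off $\red{W}$, I would observe that only the top-dimensional simplicial cones produce a non-zero contribution to the constant term in $\qq-1$: if $d' < d$, the factor $(\qq-1)^{d-d'}$ kills the contribution. For $\dim\sigma = d$ the constant term of $(\qq-1)^d \genfun{\sigma}^A(\qq^{-1},\qq^{-\ess})$ is $P_\sigma^A(1,\dotsc,1)/\prod_{i=1}^d (a_{\sigma,i} + b_{\sigma,i}\cdot\ess)$, giving the explicit formula
\[
\red{W}(\ess_1,\dotsc,\ess_m) \;=\; \sum_{\dim\sigma = d} c_\sigma\,\frac{P_\sigma^A(1,\dotsc,1)}{\prod_{i=1}^d (a_{\sigma,i} + b_{\sigma,i}\cdot\ess)},
\]
which is precisely the algorithmic output we need.

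The upper bound $\deg_{\ess_j}(\red{W}) \le 0$ is then immediate: every summand has constant numerator and denominator of degree $\le d$ in $\ess_j$, so each summand individually has degree $\le 0$ in $\ess_j$, and this upper bound is preserved under addition. The hard part is the lower bound $\deg_{\ess_j}(\red{W}) \ge -d$, because an \emph{a priori} sum of fractions of degree $\ge -d$ can drop in degree through cancellation. To handle this, I expect to exhibit a single common representation $\red{W} = N/D$ in which $D$ is a product of at most $d$ \emph{distinct} linear forms of positive degree in $\ess_j$; this can be extracted by grouping the simplicial contributions according to the facets of the triangulation they share, so that the positivity constraints $a_{\sigma,i}\ge 1$, $b_{\sigma,i}\in\NN_0^m$ inherited from the ambient cone $\cB_0$ rigidly tie the available linear forms to $\cB_0$'s face structure and cap the $\ess_j$-degree of the denominator at $d$.
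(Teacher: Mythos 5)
Your plan matches the paper's proof almost step for step: triangulate (the paper triangulates the closure $\bar\cB_0$, but this makes no essential difference since only the closure matters for the reduction), apply the monomial substitution, note that each denominator factor becomes $1-\qq^{-a}\tee^{\bm b}$ with $a\ge 1$ because the first column of $A$ is $(1,\dotsc,1)^\top$, conclude that $W\in\MM$, and observe that only the $d$-dimensional cones contribute to $\red{W}$. Your explicit formula for $\red{W}$ is exactly the paper's equation \eqref{eq:Zsigma} combined with the inclusion-exclusion. The upper bound $\deg_{\ess_j}\le 0$ is handled correctly.

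Where you go astray is the lower bound $\deg_{\ess_j}(\red W)\ge -d$. You are right that this needs an argument, but the common-denominator strategy you sketch does not work: the least common multiple of the denominators $\prod_{i}\bigl(\bil{\varrho_i A}{(1,\ess_1,\dotsc,\ess_m)}\bigr)$ over all top-dimensional $\sigma$ will in general have far more than $d$ distinct linear factors involving $\ess_j$ (the triangulation has many rays), and ``grouping by shared facets'' does not obviously bound this. The paper's argument is simpler and sharper: each $\red{Z_\sigma}$ has a \emph{positive} constant numerator $\noof{\Pi(\sigma)}$ and a denominator that is a product of exactly $d$ linear forms $a_1\ess_1+\dotsb+a_m\ess_m+b$ with $a_i\in\NN_0$ and $b\in\NN$. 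Hence every $\red{Z_\sigma}$ is strictly positive on the positive orthant, so in the sum $\red W=\sum_\sigma \red{Z_\sigma}$ no cancellation of the asymptotic leading behaviour in $\ess_j$ can occur. Specialising the other variables to generic positive rationals and sending $\ess_j\to\infty$, one sees that $\deg_{\ess_j}(\red W)$ equals the maximum of $\deg_{\ess_j}(\red{Z_\sigma})$ over the top-dimensional $\sigma$, and each of those is trivially between $-d$ and $0$. In short: positivity, not a clever common denominator, is what rules out the cancellation you were worried about.
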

\begin{proof}
  Let $\bm\lambda = (\lambda_1,\dotsc,\lambda_r)$ consist of independent
  variables over $\QQ$.
  We regard generating functions of rational half-open cones in $\Orth^r$ as
  elements of $\QQ(\bm\lambda)$, cf.\ \S\ref{ss:genfun}.

  Let $\cF$ be a triangulation of the closure $\bar\cB_0$ of $\cB_0$ into simplicial cones.
  Let $\sigma \in \cF$, say $\sigma = \cone(\varrho_1,\dotsc,\varrho_e)$, where
  $\varrho_1,\dotsc,\varrho_e\in \NN_0^r$ are primitive vectors and $e =
  \dim(\sigma)$.
  Let $\Pi(\sigma) = \{ a_1 \varrho_1 + \dotsb + a_e \varrho_e : 0 \le a_i < 1\}$.
  Then $\genfun{\sigma} = \bigl(\sum_{\beta\in \Pi(\sigma)\cap \ZZ^r}
  \bm\lambda^\beta\bigr)/\prod_{i=1}^e(1-\bm\lambda^{\varrho_i})$.
  Write $\one = (1,\dotsc,1)$ and $A = [\one^\top,\alpha_1^\top,\dotsc,\alpha_m^\top]$.
  Therefore
  \[
  \genfun{\sigma}^A(\qq^{-1},\tee_1,\dotsc,\tee_m) =
  \frac{\sum\limits_{\beta\in \Pi(\sigma)\cap\ZZ^r} \qq_{\phantom 1}^{-\bil\one\beta}
    \tee_1^{\bil{\alpha_1}\beta} \dotsb  \tee_m^{\bil{\alpha_m}{\beta}}}
  {\prod\limits_{i=1}^e\Bigl( 1- \qq_{\phantom 1}^{-\bil\one{\varrho_i}}
    \tee_1^{\bil{\alpha_1}{\varrho_i}} \dotsb 
    \tee_m^{\bil{\alpha_m}{\varrho_i}} \Bigr)
    }.
  \]
  For $b\in \NN$ and $\bm a = (a_1,\dotsc,a_m) \in \NN_0^m$, let
  $W_{\bm a,b}(\qq,\tee_1,\dotsc,\tee_m) :=
  \frac{\qq-1}{1-\qq_{\phantom 1}^{-b}\tee_1^{a_1}\dotsb \tee_m^{a_m}}$.
  Then $W_{\bm a, b}(\qq,\tee_1,\dotsc,\tee_m) \in \MM$ and
  $\red{W_{\bm a,b}}(\ess_1,\dotsc,\ess_m) = 1/(a_1 \ess_1 + \dotsb + a_m \ess_m + b)$.
  We conclude that $Z_{\sigma}(\qq,\tee_1,\dotsc,\tee_m) := (\qq-1)^d \dtimes
  \genfun{\sigma}^A(\qq^{-1},\tee_1,\dotsc,\tee_m)$ belongs to $\MM$.
  Moreover, if $\dim(\sigma) = e < d = \dim(\cB_0)$, then
  $\red{Z_\sigma}(\ess_1,\dotsc,\ess_m) = 0$.
  If, on the other hand, $d = e$, then
  \begin{equation}
    \label{eq:Zsigma}
    \red{Z_\sigma}(\ess_1,\dotsc,\ess_m) =
    \frac{\noof{\Pi(\sigma)}}{\prod_{i=1}^e \bil{\varrho_i A}{(1,\ess_1,\dotsc,\ess_m)}};
  \end{equation}
  we note that $\noof{\Pi(\sigma)}$ is the usual multiplicity of
  the simplicial cone $\sigma$, see \cite[Prop.\ 11.1.8]{CLS11}.

  As a consequence of the inclusion-exclusion principle, we may write
  $
  W(\qq,\tee_1,\dotsc,\tee_m) = \sum_{\sigma\in \cF} c_\sigma Z_\sigma(\qq,\tee_1,\dotsc,\tee_m)
  $,
  where $c_\sigma\in \ZZ$ and 
  $c_\sigma = 1$ whenever $\dim(\sigma) = d$.
  In particular, $W(\qq,\tee_1,\dotsc,\tee_m)\in \MM$ and
  $\red{W}(\ess_1,\dotsc,\ess_m) = \sum_\sigma
  \red{Z_\sigma}(\ess_1,\dotsc,\ess_m)$,
  where the sum is taken over those $\sigma \in \cF$ with $\dim(\sigma) = d$
  only.
  In the expression for $\red{Z_\sigma}(\ess_1,\dotsc,\ess_m)$ given in
  \eqref{eq:Zsigma},
  the numerator is a positive constant and the denominator is a product of
  $\dim(\sigma)$ factors of the form $a_1 \ess_1 + \dotsb + a_m \ess_m + b$
  for $a_1,\dotsc,a_m\in \NN_0$ and $b\in \NN$.
  By the non-negativity of all these numbers,
  the degree of $\red{W}(\ess_1,\dotsc,\ess_m)$ in $\ess_j$ is simply
  the maximal degree of any $\red{Z_\sigma}(\ess_1,\dotsc,\ess_m)$ in $\ess_j$ for
  $\sigma\in \cF$ with $\dim(\sigma) = d$. 
\end{proof}

\begin{rem}
  \quad
  \begin{enumerate}
  \item
    In \cite[\S 5]{DL92}, Denef and Loeser gave an explicit convex-geometric
    formula for the topological zeta function associated with a suitably
    non-degenerate polynomial.
    In view of the $p$-adic formulae of Denef and Hoornaert \cite{DH01},
    the explicit descriptions of the rational functions $J(\tau,s)\in \QQ(s)$ in
    terms of triangulations in \cite{DL92} can be regarded as a special case of
    Lemma~\ref{lem:compute_reduction}. 
  \item
    The proof of Lemma~\ref{lem:compute_reduction} shows that
    the rational function $\red{W}(\ess_1,\dotsc,\ess_m)$ only depends on the
    closure of $\cB_0$.
  \end{enumerate}
\end{rem}

\begin{cor}
  \label{cor:WJinM}
  Notation as in \S\ref{ss:rewrite}.
  For each $J\subset \{1,\dotsc,r\}$, the rational function
  $W_J(\qq,\tee_1,\dotsc,\tee_m)$ belongs to $\MM$.
  Moreover, $\red{W_J}(\ess_1,\dotsc,\ess_m) = 0$ if and only if $\dim(\cC_0^J)
  < n - \dim(\tau) + \card{J}$.
\end{cor}
\begin{proof}
  $\dim(\cC_0) \le n - \dim(\tau)$ 
  and $\cC_0^J \subset \cC_0 \times \StrictOrth^J$ whence $\dim(\cC_0^J) \le n - \dim(\tau) +
  \card J$.
\end{proof}

\subsection{Computing Euler characteristics}
\label{ss:euler}

Let $U$ be the closed subvariety of $\Torus^n_k$ defined by $f_1 = \dotsb = f_r =
0$ for $f_1,\dotsc,f_r\in k[\XX^{\pm 1}]$.
The typical example to bear in mind is the case where
$f_1,\dotsc,f_r$  are the initial forms of a non-empty regular toric datum.
We now consider the computation of the topological Euler characteristic $\Euler(U(\CC))$.
The function \func{EvaluateTopologically} (see \S\ref{ss:EvaluateTopologically})
will rely on our ability to compute these numbers.

\paragraph{General methods.}
Aluffi \cite{Alu03} described an algorithm for computing the topological Euler characteristic
of a not necessarily smooth projective variety in characteristic zero based 
on the computation of so-called Chern-Schwartz-MacPherson classes;
for recent developments, see \cite{Jos13,Hel14}.
In principle, such general algorithms can be used to compute $\Euler(U(\CC))$
from above.
Indeed, after clearing denominators, we may assume that $f_1,\dotsc,f_r\in
k[\XX]$. Let $\tilde f_i\in k[X_0,\dotsc,X_{n+1}]$ denote the homogenisation of
$f_i$. Let the subvarieties $V,W \subset \PP^n_k$ be defined by $\tilde f_1 = \dotsb =
\tilde f_r = 0$ and $\tilde f_1 = \dotsb = \tilde f_r = X_0 \dotsb X_n = 0$, respectively.
Then $U \approx_k V\setminus W$ and so $\Euler(U(\CC)) = \Euler(V(\CC)) - \Euler(W(\CC))$;
cf.\ \cite[\S 2.8]{Alu03}.

In practice, while implementations of \cite{Alu03,Jos13,Hel14} exist, these
methods are usually too costly for our applications to the computation
of topological zeta functions.
For example, the computation of $\zeta_{\Fil_4,\topo}(\ess)$ previously announced
in \cite[\S 7.3]{topzeta} involves the Euler characteristics of thousands of
subvarieties of $\Torus^{15}_{\QQ}$.
In our implementation (see \S\ref{s:imp}), we therefore attempt to compute
Euler characteristics using special-purpose methods.

\paragraph{The Bernstein-Khovanskii-Kushnirenko Theorem.}
As we already exploited in \cite[\S 6]{topzeta},
if $(f_1,\dotsc,f_r)$ is non-degenerate in the sense of Khovanskii \cite[\S 2]{Kho77},
then \cite[\S 3, Thm~2]{Kho77} provides an explicit formula for
$\Euler(U(\CC))$ in terms of various mixed volumes associated with the Newton polytopes of
$f_1,\dotsc,f_r$.
Since $U$, and hence $\Euler(U(\CC))$, only depends on the radical of the ideal
generated by $f_1,\dotsc,f_r$ within $k[\XX^{\pm 1}]$,
when $(f_1,\dotsc,f_r)$ is degenerate in Khovanskii's sense,
we can try to use standard techniques such as multivariate polynomial division
(after clearing denominators) and saturation to ``simplify'' $(f_1,\dotsc,f_r)$,
e.g.\ by reducing $\sum_{i=1}^r \card{\supp(f_i)}$.

\paragraph{Decomposing subvarieties of tori.}
If  $(f_1,\dotsc,f_r)$ remains degenerate after applying the simplification
steps indicated above, we try to decompose $U$ as follows.
Suppose that after renumbering of $1,\dotsc,r$ (or, more generally, a suitable
application of a matrix from $\GL_n(\ZZ)$ as in \S\ref{s:torus_factors}) and
rescaling of $f_1$ by Laurent monomials (which does not change $U$), we have
$f_1 = X_n - w$ for $w \in k[X_1^{\pm 1},\dotsc,X_{n-1}^{\pm 1}]$.
Let $V \subset \Torus^{n-1}_k$ be the subvariety
defined by $f_2(X_1,\dotsc,X_{n-1},w) = \dotsb = f_r(X_1,\dotsc,X_{n-1},w) = 0$ and
let
$W\subset V$ be defined by $w = 0$. Then $U \approx_k V\setminus W$ and we can
recursively try to compute $\Euler(U(\CC)) = \Euler(V(\CC)) - \Euler(W(\CC))$
using the techniques mentioned above.

\paragraph{}
In practice, combining these methods  often suffices to compute Euler characteristics in
Algorithm~\ref{alg:main}.

\subsection{An algorithm for computing topological zeta functions associated with regular toric data}
\label{ss:EvaluateTopologically}

The following is a topological version of Proposition~\ref{prop:regular_padic_rewritten}.

\begin{prop}
  \label{prop:regular_top_rewritten}
  Notation as in Proposition~\ref{prop:regular_padic_rewritten}; in particular,
  $\cT = (\cC_0;f_1,\dotsc,f_r)$ is a regular toric datum. Then
  {\small
    \[
    \Zeta^{\cT,\beta}_{\topo}(\ess_1,\dotsc,\ess_m) = 
    \sum_{\substack{J \subset T \subset \{1,\dotsc,r\},\\
        n - d(T) + \card J = \dim(\cC_0^J)
      }}
    \!\!\!\! \!\!\!\!
    (-1)^{\card J + \card T} \dtimes
    \Euler(U_T(\CC)) \dtimes
    \red{W_J}(\ess_1,\dotsc,\ess_m).
    \]
  }
\end{prop}
\begin{proof}
  Let $J\subset T$.
  Proposition~\ref{prop:regular_padic_rewritten} and Corollary~\ref{cor:WJinM}
  show that $\Zeta^{\cT,\beta}_{\topo}(\ess_1,\dotsc,\ess_m)$ is the sum of
  $(-1)^{\card J + \card T} \dtimes  \Euler(U_T(\CC)) \dtimes
  \red{W_J}(\ess_1,\dotsc,\ess_m)$
  over pairs $J\subset T$ with $d(T)=\dim(\tau)$ and $n-\dim(\tau) + \card{J}
  = \dim(\cC_0^J)$.
  The latter two conditions are both satisfied if and only if $n-d(T) + \card J = \dim(\cC_0^J)$ since
  $d(T)\le \dim(\tau)$ and $n-\dim(\tau) + \card J \ge \dim(\cC_0^J)$.
\end{proof}

The Euler characteristics $\Euler(U_T(\CC))$ can be determined as in
\S\ref{ss:euler},
while the rational functions $\red{W_J}(\ess_1,\dotsc,\ess_m)$ may be computed as explained in \S\ref{ss:compute_reduction}.
We obtain the following algorithm. 

\paragraph{The function \textrm{\func{EvaluateTopologically}}.}
We let \func{EvaluateTopologically} denote the function which, given a regular
toric datum $\cT$ in $n$ variables over $k$ and a matrix
$\beta\in\Mat_{m\times n}(\NN_0)$, computes
$\Zeta^{\cT,\beta}_{\topo}(\ess_1,\dotsc,\ess_m)$ using
Proposition~\ref{prop:regular_top_rewritten}.
Specifically, for each $J\subset \{1,\dotsc,r\}$, we
use \S\ref{s:torus_factors} and \S\ref{ss:euler} to first compute $e_J :=
     \sum
     (-1)^{\card J + \card T} \dtimes
     \Euler(U_T(\CC))$,
the sum being over those $T \supset J$ with $n - d(T) + \card J = \dim(\cC_0^J)$
as in Proposition~\ref{prop:regular_top_rewritten}.
Only if $e_J$ turns out to be non-zero, do we proceed to compute
 $\red{W_J}(\ess_1,\dotsc,\ess_m)$ using
a triangulation of the closure of $\cC_0^J$ as in the proof of Lemma~\ref{lem:compute_reduction}.

\begin{rem}
  \label{r:ET}
  \quad
  \begin{enumerate}
  \item
  \label{r:ET1}
  The topological zeta functions that we seek to compute can be written as
  univariate specialisations of topological zeta functions associated
  with toric data, see Remark~\ref{r:special}.
  In practice, we avoid the costly multivariate rational function arithmetic
  altogether and apply these specialisations directly in
  the triangulation step of \func{EvaluateTopologically};
  for a theoretical justification, use \cite[Rem.\ 5.15]{topzeta}.
  \item
  \label{r:ET2}
  If we ignore the simplification step in line~\ref{alg:simplification_step} of
  Algorithm~\ref{alg:main}, then, at this point, we have obtained an algorithmic
  version of \cite[Thm~6.7]{topzeta} (restricted to the integrals considered
  here).
  Namely, let $(\cC_0;\ff)$ be a toric datum as in
  Algorithm~\ref{alg:main} and
  suppose that $\ff$ is globally non-degenerate in the sense of \cite[Def.\ 4.2(ii)]{topzeta}.
  It follows from \cite[Lem.\ 6.1]{topzeta} that
  each $\cT \in \func{Balance}(\cC_0;\ff)$ is regular and that the
  defining polynomials of the varieties $U_T$ in
  $\func{EvaluateTopologically}(\cT,\beta)$
  satisfy Khovanskii's non-degeneracy conditions;
  the computation of $e_J$ as part of \func{EvaluateTopologically} is then a direct
  implementation of \cite[Prop.\ 6.5]{topzeta}.
  \end{enumerate}
\end{rem}

\section{Simplification and reduction}
\label{s:simplify}

We now describe the remaining two functions \func{Simplify} and
\func{Reduce} in Algorithm~\ref{alg:main}.

\subsection{Weak and strong equivalence of toric data}

\begin{defn}
    Let $(\cC_0;\ff)$ and $(\cD_0;\bm g)$ be toric data over $k$.
  \begin{enumerate}
  \item
    We say that $(\cC_0;\ff)$ and $(\cD_0;\bm g)$ are \emph{strongly equivalent} if
    $\cC_0 = \cD_0$ and
    there exists a finite $S\subset \Spec(\fo)$ such that if $K\supset k$ is a
    $p$-adic field with $\fo\cap\fP_K\not\in S$, then 
    $\norm{\ff(\xx)}_K = \norm{\bm g(\xx)}_K$ for all $\xx\in
    \Torus^n(K)$ with $\nu_K(\xx)\in \cC_0$.
  \item
    We say that $(\cC_0;\ff)$ and $(\cD_0;\bm g)$ are
    \emph{weakly equivalent} if there is a finite $S\subset \Spec(\fo)$ such
    that $(\cC_0;\ff)_K = (\cD_0;\bm g)_K$ for all
    $p$-adic fields $K\supset k$ with $\fo\cap\fP_K\not\in S$.
  \end{enumerate}
\end{defn}

Strong equivalence implies weak one but
the converse is false; for example, 
$\bigl(\Orth;X_1^{-1}\bigr)$ and $\bigl(\Orth;X_1^{-1}-X_1^{\phantom 1}\bigr)$
are weakly equivalent but not strongly so.
Theorem\ \&\ Definition~\ref{thmdef:top} yields the following.
\begin{lemma}
  Let $\cT$ and $\cT'$ be weakly equivalent toric data in $n$
  variables over $K$ and let $\beta\in \Mat_{m\times n}(\NN_0)$.
  Then $\Zeta^{\cT,\beta}_{\topo}(\ess_1,\dotsc,\ess_m) = \Zeta^{\cT',
    \beta}_{\topo}(\ess_1,\dotsc,\ess_m)$.
  \qed
\end{lemma}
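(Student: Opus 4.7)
The plan is to observe that this is essentially a direct consequence of the definitional setup in Theorem \& Definition~\ref{thmdef:top}, with weak equivalence serving precisely to make the two $K$-indexed families of local zeta functions agree up to a finite exceptional set.

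First, I would unwind the definitions. By weak equivalence, there exists a finite $S_0\subset\Spec(\fo)$ such that $\cT_K = \cT'_K$ for every $p$-adic field $K\supset k$ with $\fo\cap\fP_K\not\in S_0$. For such $K$, the defining integral \eqref{eq:basic_integral} depends only on the domain of integration and the fixed matrix $\beta$, so
\[
\Zeta_K^{\cT,\beta}(s_1,\dotsc,s_m) = \Zeta_K^{\cT',\beta}(s_1,\dotsc,s_m)
\]
as meromorphic functions of $(s_1,\dotsc,s_m)$, and hence as elements of $\QQ(\tee_1,\dotsc,\tee_m)$ under $\tee_j\mapsto q_K^{-s_j}$.

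Next, I would invoke the existence of an ``explicit formula'' for $\Zeta^{\cT,\beta}$ of the type required by Theorem \& Definition~\ref{thmdef:top}: there are $k$-varieties $V_i$ and $W_i(\qq,\tee_1,\dotsc,\tee_m)\in\MM$ (for $i$ in some finite index set $I$) and a finite $S_1\subset\Spec(\fo)$ such that
\[
\Zeta_K^{\cT,\beta}(\tee_1,\dotsc,\tee_m) = \sum_{i\in I} \noof{\bar V_i(\fO_K/\fP_K)}\dtimes W_i(q_K,\tee_1,\dotsc,\tee_m)
\]
for every $K$ with $\fo\cap\fP_K\not\in S_1$; this is guaranteed by the discussion surrounding \eqref{eq:basic_denef} (and already noted in \S\ref{ss:toptcc}). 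Combining with the previous step, the same family $(V_i, W_i)_{i\in I}$ also provides an explicit formula for $\Zeta^{\cT',\beta}$, valid for all $K$ with $\fo\cap\fP_K\not\in S_0\cup S_1$.

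Finally, by the independence statement in Theorem \& Definition~\ref{thmdef:top}, the topological zeta function attached to a family is determined by any such explicit formula. Applying this to both $\Zeta^{\cT,\beta}$ and $\Zeta^{\cT',\beta}$ via the \emph{same} family $(V_i, W_i)_{i\in I}$, we obtain
\[
\Zeta_{\topo}^{\cT,\beta}(\ess_1,\dotsc,\ess_m) = \sum_{i\in I} \Euler(V_i(\CC))\dtimes \red{W_i}(\ess_1,\dotsc,\ess_m) = \Zeta_{\topo}^{\cT',\beta}(\ess_1,\dotsc,\ess_m),
\]
which is the desired equality. There is no genuine obstacle here: the only point requiring a moment's care is the bookkeeping of exceptional sets, which we handle by taking $S_0\cup S_1$; the substantive content is entirely provided by the well-definedness clause of Theorem \& Definition~\ref{thmdef:top}.
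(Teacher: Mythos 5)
Your proof is correct and follows exactly the route the paper intends: the paper merely states that the lemma ``follows from Theorem \& Definition~\ref{thmdef:top}'' and gives no further detail, and your argument supplies precisely the missing bookkeeping (weak equivalence gives agreement of the local families outside a finite set of primes, one shared explicit-formula datum $(V_i, W_i)_{i\in I}$ then works for both families, and the well-definedness clause of Theorem \& Definition~\ref{thmdef:top} forces the two topological zeta functions to coincide).
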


We now collect some instances of these equivalences
in a form that resembles Gaussian elimination and the multivariate polynomial
division algorithm (see e.g.\ \cite[\S 1.5]{AL94}).
By a \emph{term}, we mean a Laurent polynomial of the form $c \XX^\alpha$,
where $c\in k^\times$ and $\alpha\in \ZZ^n$. 
Given a rational half-open cone $\cC_0 \subset \Orth^n$, its
dual $\cC_0^* = \{ \omega\in \RR^n : \bil\alpha\omega \ge 0 \text{ for all }
\alpha\in \cC_0\}$ is a rational \itemph{closed} cone which contains $\Orth^n$.
The unit group $k[\cC_0^*\cap \ZZ^n]^\times$ of $k[\cC_0^*\cap \ZZ^n]$ (see \S\ref{ss:genfun})
 consists precisely of those terms $c\XX^\alpha$ with $c\in k^\times$ and
 $\alpha\in \cC_0^\perp \cap \ZZ^n$.
In the following, we assume that $r\in \NN_0$ is large enough for the statements given to make sense.

\begin{lemma}
  \label{lem:eqop}
  Let $(\cC_0;\ff) = (\cC_0; f_1,\dotsc,f_r)$ be a toric datum over $k$.
  We let $\sim_s$ and $\sim_w$ signify strong and weak equivalence,
  respectively.
  Then:
  \begin{enumerate}
  \item[(\textlabel{S0}{S0})]
    $(\cC_0;f_1,\dotsc,f_r,0) \sim_s (\cC_0;f_1,\dotsc,f_r)$.
  \item[(\textlabel{S1}{S1})]
    $(\cC_0;f_1,\dotsc,f_r) \!\sim_s\! (\cC_0;f_{1\sigma},\dotsc,f_{r\sigma})$ 
    for any permutation $\sigma\in \mathrm{Sym}(r)$.
  \item[(\textlabel{S2}{S2})]
    If $u\in k[\cC_0^*\cap \ZZ^n]^\times$ and $v\in k[\cC_0^*\cap \ZZ^n]$, then
    $(\cC_0; \ff) \sim_s (\cC_0; uf_1 + vf_2,f_2,\dotsc,f_r)$.
  \item[(\textlabel{S3}{S3})]
    If $f_1$ is $\cC_0$-balanced
    and $\init_{\cC_0}(f_1)$ is
    a term, then $(\cC_0; \ff) \sim_s (\cC_0;\init_{\cC_0}(f_1),f_2,\dotsc,f_r)$.
  \item[(\textlabel{W1}{W1})]
    If $v\in k[\cC_0^*\cap \ZZ^n]$,
    then $(\cC_0;\ff) \sim_w (\cC_0;f_1 + v,f_2,\dotsc,f_r)$.
  \item[(\textlabel{W2}{W2})]
    If $f_1= c\XX^\alpha$ for $c\in k^\times$ and $\alpha\in \ZZ^n$,
    then $(\cC_0;\ff)\sim_w  (\cC_0\cap \{\alpha\}^*;f_2,\dotsc,f_r)$.
  \end{enumerate}
\end{lemma}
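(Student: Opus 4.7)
The plan is to deduce all six parts from two elementary observations about Laurent polynomials with support in $\cC_0^*\cap\ZZ^n$, combined with the ultrametric property of $\abs\blank_K$.  First, for $v=\sum c_\alpha\XX^\alpha \in k[\cC_0^*\cap \ZZ^n]$ and $\xx\in\Torus^n(K)$ with $\nu_K(\xx)\in \cC_0$, each monomial satisfies $\abs{c_\alpha\XX^\alpha(\xx)}_K = \abs{c_\alpha}_K\, q_K^{-\bil\alpha{\nu_K(\xx)}} \le \abs{c_\alpha}_K$ by definition of $\cC_0^*$; excluding the finitely many primes occurring in the denominators of the coefficients of all polynomials under consideration yields $\abs{v(\xx)}_K\le 1$ for almost all $K$.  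Second, the identification of $k[\cC_0^*\cap\ZZ^n]^\times$ with terms $c\XX^\gamma$, $\gamma\in\cC_0^\perp$, recorded just before the lemma, together with $\bil\gamma{\nu_K(\xx)}=0$, yields $\abs{u(\xx)}_K=\abs{c}_K=1$ for almost all $K$ whenever $u$ is a unit in that algebra.

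Parts (\ref{S0}) and (\ref{S1}) are immediate from $\norm{\ff(\xx)}_K = \max_i \abs{f_i(\xx)}_K$.  For (\ref{S2}), I would combine the two observations with the ultrametric inequality to bound $\abs{(uf_1+vf_2)(\xx)}_K \le \max\bigl(\abs{f_1(\xx)}_K, \abs{f_2(\xx)}_K\bigr)$, and then recover the reverse inequality by rewriting $f_1 = u^{-1}(uf_1+vf_2) - u^{-1}vf_2$, using that $u^{-1}\in k[\cC_0^*\cap\ZZ^n]^\times$ and $u^{-1}v\in k[\cC_0^*\cap\ZZ^n]$; the remaining $\abs{f_i(\xx)}_K$ are untouched, so the two maxima agree.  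For (\ref{S3}), the hypothesis that $\init_{\cC_0}(f_1)$ is a single term $c\XX^{\alpha_0}$ forces $\bil{\alpha-\alpha_0}\omega > 0$ for every $\omega\in\cC_0$ and every $\alpha\in\supp(f_1)\setminus\{\alpha_0\}$; since $\nu_K(\xx)\in\cC_0\cap\ZZ^n$, this integer pairing is at least $1$, so $\abs{c_\alpha\XX^\alpha(\xx)}_K \le q_K^{-1}\abs{c\XX^{\alpha_0}(\xx)}_K$ for almost all $K$, and strict ultrametric domination forces $\abs{f_1(\xx)}_K = \abs{c\XX^{\alpha_0}(\xx)}_K$.

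For (\ref{W1}), I would compare the defining conditions of the two sets directly: given $\abs{v(\xx)}_K\le 1$, the ultrametric inequality makes $\abs{f_1(\xx)}_K\le 1$ and $\abs{(f_1+v)(\xx)}_K\le 1$ equivalent, while the other constraints are literally identical.  For (\ref{W2}), the observation is that for almost all $K$ the condition $\abs{c\XX^\alpha(\xx)}_K \le 1$ reduces to $\bil\alpha{\nu_K(\xx)}\ge 0$, i.e.\ to $\nu_K(\xx)\in\{\alpha\}^*$, so the monomial constraint is absorbed into the cone without loss.  The only genuinely delicate step is (\ref{S3}), where the jump from strict positivity on the continuous cone to values $\ge 1$ on integer lattice points is what upgrades the ultrametric inequality to an equality; everywhere else the argument is a mechanical application of the ultrametric property together with the two base observations.
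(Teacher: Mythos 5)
Your proof is correct and follows essentially the same route as the paper's: the key observations about $\abs{u(\xx)}_K=1$ for units and $\abs{v(\xx)}_K\le 1$ for integral elements of $k[\cC_0^*\cap\ZZ^n]$, together with ultrametric estimates. The paper streamlines (S2), (S3), and (W2) via the expansion $g(\xx)=\pi_K^{\bil\alpha\omega}\dtimes(\init_\omega(g)(\uu)+\cO(\pi_K))$ and derives (S2) from the elementary identity $\norm{a,b}_K=\norm{a+eb,b}_K$ for $e\in\fO_K$, whereas you argue (S2) by proving the two one-sided inequalities separately (using $f_1=u^{-1}(uf_1+vf_2)-u^{-1}vf_2$) and (S3) by a direct term-by-term comparison showing strict domination by the initial term; these are equivalent reformulations, not a genuinely different argument.
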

\begin{proof}
  (\ref{S0}) and (\ref{S1}) are obvious.
  Let $K\supset k$ be a $p$-adic field. 
  Let $\omega\in \ZZ^n$ and $\xx \in \Torus^n(K)$ with $\nu_K(\xx) = \omega$.
  Write $\xx = (\pi_K^{\omega_1}u_1,\dotsc,\pi_K^{\omega_n}u_n)$ for $\uu\in\Torus^n(\fO_K)$.
  Then, for any non-zero $g\in \fO_K[\XX^{\pm 1}]$, we have
  $g(\xx) = \pi_K^{\bil\alpha\omega} \dtimes (\init_\omega(g)(\uu) + \cO(\pi_K))$,
  where $\alpha\in \supp(\init_\omega(g))$ is arbitrary.
  Hence, if $u\in k[\cC_0^*\cap\ZZ^n]^\times$, $v\in k[\cC_0^*\cap\ZZ^n]$,
  and $\xx\in \Torus^n(K)$ with $\nu_K(\xx)\in \cC_0$, then $\abs{u(\xx)}_K = 1$ and
  $\abs{v(\xx)}_K\le 1$, provided that the 
  unique non-zero coefficient of $u$ is a $\fP_K$-adic unit and
  all coefficients of $v$ are $\fP_K$-adic integers.
  (\ref{W1}) is now obvious.
  (\ref{S2}) follows since if $a,b\in K$ and $e\in \fO_K$,
  then $\norm{a,b}_K = \norm{a+eb,b}_K$.
  Indeed, if $\abs{a}_K \le \abs{eb}_K$, then $\abs{a}_K,\abs{a+eb}_K \le \abs{b}_K$;
  if, on the other hand, $\abs{a}_K > \abs{eb}_K$, then $\abs{a+eb}_K = \abs{a}_K$.
  For (\ref{S3}) and (\ref{W2}), let $f_1$ be $\cC_0$-balanced with
  $\init_{\cC_0}(f_1) = c\XX^{\alpha}$, where $c\in k^\times$ and $\alpha\in
  \ZZ^n$; we may assume that $c\in \fO_K^\times$ and $f_1\in \fO_K[\XX^{\pm 1}]$.
  Then for $x\in \Torus^n(K)$ with $\nu_K(\xx) = \omega \in \cC_0\cap \ZZ^n$, we
  have $\abs{f_1(\xx)}_K = q_K^{-\bil\alpha\omega} = \abs{c \xx^{\alpha}}_K$.
  Hence, $\abs{f_1(\xx)}_K \le 1$ if and only if $\bil\alpha\omega\ge 0$.
\end{proof}

\subsection{Simplification}
\label{ss:simplify}

What we call simplification is the systematic application of Lemma~\ref{lem:eqop} (with the
exception of operation (\ref{S2}), see Proposition~\ref{prop:simplify_preserves}) to
toric data.
\begin{defn}
  \label{d:simple}
  A toric datum $(\cC_0;f_1,\dotsc,f_r)$ is
  \emph{simple} if the following conditions are satisfied for $i = 1,\dotsc,r$:
  \begin{enumerate}
  \item 
    $f_i\not= 0$ and no term of $f_i$ lies in $k[\cC_0^*\cap \ZZ^n]$.
  \item
    If $f_i^{\phantom 1}\! f_j^{-1} \in k[\cC_0^*\cap\ZZ^n]$ for $1\le j\le
    r$, then $i = j$.
  \item
    If $f_i$ is $\cC_0$-balanced, then $\init_{\cC_0}(f_i)$ consists of at least
    two terms.
  \end{enumerate}
\end{defn}

\paragraph{The function \textrm{\func{Simplify}}.}
We now describe the function \func{Simplify} in
Algorithm~\ref{alg:main}.
Given a toric datum $(\cC_0;f_1,\dotsc,f_r)$, we remove those terms of each
$f_i$ that lie in $k[\cC_0^*\cap\ZZ^n]$. 
We then discard those $f_j$ with $f_j = 0$ altogether.
Next, if $f_i^{\phantom 1}\!f_j^{-1} \in k[\cC_0^*\cap\ZZ^n]$ for $i\not= j$,
then we discard $f_i$.
Finally, if some $f_i$ is $\cC_0$-balanced with $\init_{\cC_0}(f_i) = c\XX^\alpha$
(where $c\in k^\times$), then we discard $f_i$ and replace $\cC_0^{\phantom *}$ by $\cC_0 \cap
\{\alpha\}^*$.
Since shrinking $\cC_0^{\phantom *}$ enlarges its dual $\cC_0^*$, further terms
might now become redundant.
We therefore repeatedly apply the above
process until $(\cC_0;f_1,\dotsc,f_r)$ stabilises.
As each non-trivial operation decreases $r + \sum_{i=1}^r \noof{\supp(f_i)}$, after
finitely many steps, we obtain a simple toric datum
which is weakly equivalent to the original $(\cC_0;f_1,\dotsc,f_r)$ by Lemma~\ref{lem:eqop}.

\paragraph{}
The reason we only made very limited use of Lemma~\ref{lem:eqop}(\ref{S2}) is 
to ensure the following:
\begin{prop}
  \label{prop:simplify_preserves}
  Let $\cT$ be a toric datum.
  \begin{enumerate}
  \item
  \label{prop:simplify_preserves1}
    If $\cT$ is balanced, then so is $\func{Simplify}(\cT)$.
  \item
  \label{prop:simplify_preserves2}
    If $\cT$ is regular, then so is $\func{Simplify}(\cT)$.
  \end{enumerate}
\end{prop}
\begin{proof}
  Let $\cT = (\cC_0;\ff)$ be non-trivial.
  The properties of being balanced or regular are preserved if we
  discard polynomials, shrink $\cC_0$, or remove non-initial terms.
  Let $f\in \ff$ be $\cC_0$-balanced
  and suppose that $\init_{\cC_0}(f)$ contains a term $c\XX^\alpha$ with $c\in
  k^\times$ and $\alpha\in \cC_0^*$. 
  Then $0 \le \bil\alpha\omega \le \bil\beta\omega$ for all $\beta\in \supp(f)$ and
  $\omega\in \cC_0$ whence $\func{Simplify}(\cT)$ will discard $f$ entirely.
\end{proof}

\subsection{Reduction}
\label{ss:reduce}

We now describe the ``reduction step'' in Algorithm~\ref{alg:main}.
The function \func{Reduce} takes as input a balanced (Definition~\ref{d:balanced}(\ref{d:balanced2}))
and simple (Definition~\ref{d:simple}) toric datum $\cT =
(\cC_0;f_1,\dotsc,f_r)$ which is singular (Definition~\ref{d:regular}).
We therefore cannot directly use \func{EvaluateTopologically} from
\S\ref{ss:EvaluateTopologically}
to compute the associated topological zeta function.
Our goal is to construct and return a partition
$\mathfrak C$ of $\cT$.
Ideally, we would like $\mathfrak C$ to consist of regular toric data
but our immediate goal is more modest:
we systematically construct \itemph{some} non-trivial partition
$\mathfrak C$ in the hope that repeated further applications of \func{Balance}, 
\func{Simplify}, and \func{Reduce} to its members in the main loop of
Algorithm~\ref{alg:main} will eventually produce regular toric data only.
Success of this procedure is not guaranteed and we need to
allow \func{Reduce} to fail (at which point Algorithm~\ref{alg:main}
will fail too) in order to guarantee termination.

\paragraph{Reduction candidates.}
We begin by isolating a source of the singularity of $\cT$.
Namely, the method for regularity testing in \S\ref{ss:regular} readily provides
us with an inclusion-minimal set $J \subset\{1,\dotsc,r\}$ such 
that the Jacobian matrix of $(\init_{\cC_0}(f_j))_{j\in J}$ has rank
less than $\card J$ at some point $\uu\in \Torus^n(\bar k)$ with $\init_{\cC_0}(f_j)(\uu)
= 0$ for all $j\in J$.
After renumbering $f_1,\dotsc,f_r$, we may assume that $J = \{ 1,\dotsc,e\}$.
If $e = 1$, then we give up and let \func{Reduce} fail.
Suppose that $e \ge 2$.
By a \emph{reduction candidate} for $\cT$ we mean a quadruple
$(i,j,t_i,t_j)$, where $1\le i < j \le e$, 
$t_i$ is a term of $\init_{\cC_0}(f_i)$,
and $t_j$ is a term of $\init_{\cC_0}(f_j)$.

\paragraph{Performing reduction.}
Let $(i,j,t_i,t_j)$ be a reduction candidate for $\cT$.
Let $\alpha_i$ and $\alpha_j$ denote the exponent vectors of the monomial in
$t_i$ and $t_j$, respectively.
We decompose $\cC_0$ into two pieces
$\cC_0^{\le} := \cC_0 \cap \{ 
\alpha_j-\alpha_i\}^*$ and $\cC_0^> := \cC_0^{\phantom =}\!\setminus
\cC_0^{\le}$, both of which are themselves rational half-open cones.
Note that the restriction of the linear form $\bil{\alpha_i}\blank$ to $\cC_0$
only depends on $f_i$ and not on the chosen term $t_i$, and similarly for
$\alpha_j$.
In particular, the decomposition $\cC_0^{\phantom=} = \cC_0^{\le} \cup \cC_0^{>}$ 
only depends on $(i,j)$.
Define toric data
\begin{align*}
\cT^\le &
:= \Bigl(\cC_0^\le; \,\,
f_1,\dotsc,f_{j-1}, \,\, f_j - \frac{t_j}{t_i} f_i, \,\, f_{j+1}, \dotsc,f_r
\Bigr),
\\
\cT^> &
:= \Bigl(\cC_0^>; \,\,
f_1,\dotsc,f_{i-1}, \,\, f_i - \frac{t_i}{t_j} f_j, \,\,
f_{i+1},\dotsc,f_r\Bigr).
\end{align*}

By construction,  $t_j/t_i \in k\bigl[(\cC_0^\le)^*\cap\ZZ^n\bigr]$
and $t_i/t_j\in k\bigl[(\cC_0^>)^*\cap\ZZ^n\bigr]$ so Lemma~\ref{lem:eqop}(\ref{S2})
shows that $\{ \cT^\le, \cT^>\}$ is a partition of $\cT$.
Having chosen a reduction candidate $(i,j,t_i,t_j)$,
we let $\func{Reduce}(\cT)$ return $\{ \cT^\le, \cT^>\}$.

The name ``reduction'' given to the procedure described here is due to the
similarity to reduction steps in the theory of Gr\"obner bases, see e.g.\
\cite[\S 1.5]{AL94}.
There are, however, substantial differences between the two procedures.
Most importantly, the role of divisibility relations ``$\divides{t_i}{t_j}$''
between terms in polynomial algebras in the classical setting is replaced by an
integrality condition ``$t_j/t_i \in k[\cC_0^*\cap\ZZ^n]$'' for Laurent terms.
In the present setting, we can enforce arbitrary divisibility relations of this form
by cutting $\cC_0$ in half---at the cost of having to consider the
opposite relation as well.

\paragraph{Finding reduction candidates.}
It remains to explain a strategy for choosing a reduction candidate
$(i,j,t_i,t_j)$ for $\cT$.
This is the most critical part of the entire reduction step and it may well
fail.
We use a greedy approach.
Define the \emph{weight} of a balanced toric datum $\cT' =
(\cD_0;g_1,\dotsc,g_u)$ to be $\weight(\cT') := \sum_{d=1}^u
\noof{\supp\bigl(\init_{\cD_0}(g_d)\bigr)}$.
For each reduction candidate $(i,j,t_i,t_j)$, we construct the associated
partition $\{\cT^{\le},\cT^>\}$ of $\cT$ as indicated
above.
Using \func{Balance} and \func{Simplify}, we then further refine this partition
to produce a partition, $\mathfrak C(i,j,t_i,t_j)$ say, of $\cT$ which consists of
balanced and simple toric data.
Let $\mathfrak C'(i,j,t_i,t_j)\subset\mathfrak C(i,j,t_i,t_j)$ be the subset of
singular toric data.
If $\mathfrak C'(i,j,t_i,t_j) = \emptyset$ for some $(i,j,t_i,t_j)$,
then we use such a quadruple as our reduction candidate.
Otherwise, we choose $(i,j,t_i,t_j)$ such that
$\bigl(\sum_{\cT'\in \mathfrak C'(i,j,t_i,t_j)}\weight(\cT')\bigr)/\card{\mathfrak C'(i,j,t_i,t_j)}$ is minimal.
In practice, we then of course let $\func{Reduce}(\cT)$ return $\mathfrak
C(i,j,t_i,t_j)$ instead of $\{ \cT^{\le},\cT^>\}$.

In order to ensure termination of Algorithm~\ref{alg:main},
we assign a ``depth'' to each toric datum. 
The initial toric datum given as the input of Algorithm~\ref{alg:main}
has depth $0$. We further let $\func{Balance}(\cT)$ and $\func{Simplify}(\cT)$
return toric data of the same depth as $\cT$.
If, having chosen $(i,j,t_i,t_j)$ as part of the reduction step,
we have $\weight(\cT') > \weight(\cT)$ for some $\cT' \in \mathfrak
C'(i,j,t_i,t_j)$, then we increase the depth of $\cT'$. 
Termination is guaranteed by letting $\func{Reduce}(\cT)$ fail whenever the
depth of $\cT$ exceeds some constant value.
While this approach is less elegant than a strictly greedy approach, where we
would e.g.\ insist that $\weight(\cT') < \weight(\cT)$ for all $\cT'\in
\mathfrak C'(i,j,t_i,t_j)$, it is more powerful in practice.

\paragraph{}
For an illustration of the reduction step applied to a ``real-life'' example,
see \S\ref{ss:Fil4}.

\section{Practical matters}
\label{s:imp}

\subsection{Introducing ``\textsf{Zeta}''}

The \href{http://www.python.org/}{Python}-package
\href{http://www.math.uni-bielefeld.de/rossmann/Zeta/}{\textsf{Zeta}} 
\cite{zeta} for \href{http://sagemath.org/}{\Sage} \cite{Sage} 
provides an implementation of Algorithm~\ref{alg:main} 
for computing topological subalgebra, ideal, and submodule zeta functions for $k = \QQ$.
\Sage{} natively supports computations with rational polyhedra
and we use these capabilities to simulate computations with half-open cones, see \S\ref{ss:models}.
Polynomial arithmetic and Gr\"obner bases computations are
handled by \href{http://www.singular.uni-kl.de/}{\Singular} \cite{Singular}.
For the computations of mixed volumes mentioned in \S\ref{ss:euler}, we use
\href{http://home.imf.au.dk/jensen/software/gfan/gfan.html}{\Gfan} \cite{gfan}.
While \Sage{} does provide functionality for computing triangulations,
we use the fast implementation provided by
\href{http://www.home.uni-osnabrueck.de/wbruns/normaliz/}{\Normaliz}
\cite{Normaliz} if it is available.
In order to use  \textsf{Zeta} to compute topological zeta functions
associated with nilpotent groups via Theorem~\ref{thm:nilpotent},
one may use the \textsf{GAP}-package Guarana \cite{guarana,GAP4} which provides
an effective version of the Mal'cev correspondence.

\subsection{On the scope of Algorithm~\ref{alg:main} and its implementation}

\paragraph{Theoretical limitations.}
While Algorithm~\ref{alg:main} allows us to compute far more topological zeta
functions than \cite[Thm~6.7]{topzeta} alone could, it is fairly easy to produce 
examples that seem completely resistant to our approach.
For example, while the vast majority of known topological and local
subalgebra and ideal zeta functions arise from nilpotent Lie rings, 
to the author's knowledge, not a single example of any such zeta function
associated with a nilpotent Lie ring of class $\ge 5$ has ever been computed.
In particular, there are various examples of nilpotent Lie rings of additive
rank $6$ whose topological and local subring and ideal zeta functions 
remain unknown---our method has so far been unable to remedy this.
As the additive rank of the non-associative ring under consideration
increases or the assumption that it be nilpotent and Lie is relaxed,
examples amenable to our method become rare.

\paragraph{Practical issues.}
So far, the most successful applications of \textsf{Zeta} were concerned with
(nilpotent) associative, commutative, or Lie rings of additive rank at most $6$.
Even in the case of nilpotent Lie rings of rank $6$,
some of the computations carried out by the author took several
months to complete (using $16$ parallel processes on an ordinary computer, see \S\ref{ss:impET}).
In such cases, the most expensive step in Algorithm~\ref{alg:main} is the final line.
At this point, \id{regular} will be populated with possibly thousands of regular
toric data.
For each $(\cD_0;g_1,\dotsc,g_e) \in \id{regular}$, we then consider each of the 
half-open cones $\cD_0^J$ indexed by $J\subset\{1,\dotsc,e\}$ yielding perhaps
tens of thousands of half-open cones in total.
Finally, the triangulation step in \func{EvaluateTopologically} will often
decompose each $\bar\cD_0^J$ into possibly tens or even hundreds of thousands of
simplicial cones; note that for examples of rank $6$, the ambient Euclidean space of each
$\cD_0^J$ will have dimension at least $21 = \frac{ 6\dtimes 7}2$.

\subsection{\textrm{\func{EvaluateTopologically}} in practice}
\label{ss:impET}

As mentioned in Remark~\ref{r:ET}(\ref{r:ET1}),
in our implementation of \func{EvaluateTopologically} we immediately apply
specialisations of the form $\ess_j \mapsto \ess-j$ (see
Remark~\ref{r:special}) needed to recover the desired univariate topological zeta
function. Moreover, in order to avoid costly rational function arithmetic, 
we do not actually carry out either the summation in line~\ref{alg:summing_up}
of Algorithm~\ref{alg:summing_up} nor that in the proof of
Lemma~\ref{lem:compute_reduction}.
Instead, we first compute the final output of Algorithm~\ref{alg:main},
$\Zeta_{\topo}(\ess)\in \QQ(\ess)$ say, as an
unevaluated (possibly large) sum of rational functions of the form
$\frac{c}{(a_1 \ess - b_1) \dotsb (a_d \ess - b_d)}$ for suitable integers
$a_i,b_i,c$ (arising from simplicial cones in Lemma~\ref{lem:compute_reduction}
and Euler characteristics in \S\ref{ss:euler}).
As we construct these rational functions, we keep track of a ``candidate
denominator'' of $\Zeta_{\topo}(\ess)$, i.e.\!\ a polynomial $g\in \ZZ[\ess]$,
$g\not= 0$ with $g\Zeta_{\topo}(\ess)\in \ZZ[\ess]$. 
Using Lemma~\ref{lem:degbound}, we may then recover $\Zeta_{\topo}(\ess)$ using
random evaluation and polynomial interpolation.

After successful termination of the main loop (lines~\ref{alg:init_main}--\ref{alg:end_main_loop}) in
Algorithm~\ref{alg:main}, the remaining tasks of computing Euler
characteristics, triangulating cones, and evaluating rational functions can be
trivially parallelised and our implementation makes use of this.

\subsection{Computing with half-open cones}
\label{ss:models}

We defined our basic data structure, the toric data from
\S\ref{s:tdata}, in terms of rational half-open cones $\cC_0\subset \Orth^n$ since they 
are the smallest collection of subsets of $\RR^n$ which contains $\Orth^n$ and
which is stable under the effects of \func{Balance}, \func{Simplify}, and \func{Reduce}.
However, half-open cones and polyhedra (rational or not) are scarcely used in the literature and
they are usually not directly supported by existing software.
Apart from triangulating closed rational cones,
the only computational tasks involving half-open cones that we actually relied upon
are the following:
\begin{enumerate}
\item
  Compute the intersection of two rational half-open cones.
\item
  Decide if a rational half-open cone is empty.
\item\label{ophoc3}
  Construct the closure of a non-empty rational half-open cone.
\item\label{ophoc4}
  Decide membership of a vector in the dual of a rational half-open cone.
\end{enumerate}
As $\cC_0^* = \bar\cC_0^*$ for a half-open cone $\cC_0$,
(\ref{ophoc3}) reduces~(\ref{ophoc4}) to the closed case (which is standard).

\begin{defn}
  By a (polyhedral) \emph{model} of a rational half-open cone $\cC_0\subset\RR^n$,
  we mean a rational polyhedron $\cP_0\subset\RR^n$ such that $\cC_0\cap \ZZ^n =
  \cP_0 \cap \ZZ^n$
  and $\NN \cP_0 = \cP_0$. 
\end{defn}

For example, for each $a\in \NN$, the closed
interval $[1/a,\infty)$ is a model of the open interval $(0,\infty)$.
As we will now explain, we may replace half-open cones by models in
our computations.

\begin{prop}
  \label{p:cone_models}
  Let $\cC_0^{\phantom\prime}, \cC_0'\subset \RR^n$ be rational half-open  cones.
  \begin{enumerate}
  \item
    \label{p:cone_models1}
    $\cC_0$ admits a model.
  \item
    \label{p:cone_models2}
    Let $\cP_0^{\phantom\prime}\subset\cC_0^{\phantom\prime}$ and
    $\cP_0'\subset\cC_0'$ be models.
    Then $\cP_0^{\phantom\prime}\cap\cP_0'$ is a model of
    $\cC_0^{\phantom\prime}\cap \cC_0'$.
  \item
    \label{p:cone_models4}
    Let $\cP_0$ be a model of $\cC_0$.
    Then $\cC_0$ is empty if and only if $\cP_0$ is empty.
  \item
    \label{p:cone_models3}
    If $\cC_0\not= \emptyset$ and $\cP_0$ is a model of $\cC_0$, then
    $\bar\cC_0$ is the smallest cone containing $\cP_0$.
  \item
    \label{p:cone_models5}
    If $\cC_0^{\phantom\prime} \cap \ZZ^n = \cC_0'\cap\ZZ^n$,
    then $\cC_0^{\phantom\prime} = \cC_0^\prime$. 
    Hence, $\cC_0$ is determined by any of its models.
  \end{enumerate}
\end{prop}
\begin{proof}
  \quad
  \begin{enumerate}
    \item There are finitely many $\phi_i,\chi_j \in \ZZ^n$ ($i\in
      I,j\in J$) with $\cC_0 = \{ \omega \in \RR^n : \forall i\in I. \bil{\phi_i}\omega \ge 0,
      \forall j\in J. \bil{\chi_j}\omega > 0\}$.
      Hence,
      $\{ \omega \in \RR^n : \forall i\in I. \bil{\phi_i}\omega \ge 0,
      \forall j\in J. \bil{\chi_j}\omega \ge 1\}$ is a model of $\cC_0$.
    \item
      Obvious.
    \item
      The relative interior $\relint(\cC)$ of a non-empty
      rational cone $\cC\subset\RR^n$ satisfies
      $\relint(\cC)\cap\QQ^n\not=\emptyset$ (in fact, $\relint(\cC)\cap\QQ^n$ is
      dense in $\cC$) and hence $\relint(\cC)\cap\ZZ^n\not=\emptyset$.
      Hence, if $\cP_0 = \emptyset$, then $\cC_0=\emptyset$.
      Suppose that $\cP_0\not=\emptyset$.
      There exists $\beta \in \cP_0 \cap\QQ^n$
      and thus $a\beta\in \cP_0\cap\ZZ^n = \cC_0\cap\ZZ^n$ for some $a\in \NN$ whence
      $\cC_0\not=\emptyset$.
    \item
      For $\omega \in \cC_0\cap\QQ^n$, there exists $a\in \NN$ with $a\omega\in
      \cC_0\cap\ZZ^n = \cP_0 \cap\ZZ^n\subset\cP_0$.
      Hence, every cone containing $\cP_0$ also contains 
      $\cC_0 \cap\QQ^n$, a dense subset of $\bar\cC_0$.
      Similarly, $\cP_0 \cap \QQ^n \subset \bar\cC_0$ so that $\cP_0 \subset \bar\cC_0$.
    \item
      By (\ref{p:cone_models4}),
      we may assume that $\cC_0^{\phantom\prime}\cap\ZZ^n = \cC_0' \cap\ZZ^n\not= \emptyset$.
      Let $\cP_0$ be a model of $\cC_0^{\phantom\prime}$.
      Then $\cP_0$ is also a model of both
      $\cC_0'$ and $\cC_0'' :=\cC_0^{\phantom\prime}\cap \cC_0'$.
      By (\ref{p:cone_models3}),
      $\cC_0^{\phantom\prime}, \cC_0', \cC_0''$
      all have the same closure,  $\cC$ say.
      Suppose that $\cC_0''\not= \cC_0^{\phantom\prime}$.
      Then there exists a face $\tau$ of $\cC$ such that $\relint(\tau) \subset
      \cC_0^{\phantom\prime}$ but $\tau \cap \cC_0''= \emptyset$.
      Let $\omega \in\relint(\tau)\cap \ZZ^n$. Then
      $\omega \in \cC_0^{\phantom\prime}\cap \ZZ^n = \cC_0''\cap \ZZ^n$, a contradiction.
      Hence, $\cC_0^{\phantom\prime} \subset \cC_0'$ and so
      $\cC_0^{\phantom\prime} = \cC_0'$ by symmetry.
      \qedhere
  \end{enumerate}
\end{proof}

Given a model $\cP_0$ of $\cC_0$, it remains to recover $\bar\cC_0$ explicitly
from $\cP_0$.

\begin{lemma}
  \label{lem:mincone}
  Let $\cQ = \conv(\alpha_1,\dotsc,\alpha_e) \subset\RR^n$ be a non-empty
  polytope and $\cD\subset\RR^n$ be a cone.
  Write $\cC = \cone(\alpha_1,\dotsc,\alpha_e)$.
  Then $\cC + \cD$ is the smallest cone containing $\cQ+\cD$.
\end{lemma}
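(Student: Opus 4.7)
The plan is to verify three statements: (a) $\cC+\cD$ is itself a cone in the paper's sense; (b) $\cC+\cD \supset \cQ+\cD$; and (c) any cone $\cE$ containing $\cQ+\cD$ already contains $\cC+\cD$. Together these say that $\cC+\cD$ is the smallest cone above $\cQ+\cD$.

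For (a), I would note that $\cC$ is finitely generated by $\alpha_1,\dotsc,\alpha_e$ and $\cD$ is a cone in the paper's sense (hence finitely generated, say by $\varrho_1,\dotsc,\varrho_s$), so $\cC+\cD = \cone(\alpha_1,\dotsc,\alpha_e,\varrho_1,\dotsc,\varrho_s)$ is again a cone. For (b), I would observe that $\conv(\alpha_1,\dotsc,\alpha_e)\subset\cone(\alpha_1,\dotsc,\alpha_e)$, so $\cQ\subset\cC$ and hence $\cQ+\cD\subset\cC+\cD$.

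The substantive step is (c). Let $\cE$ be any cone with $\cE\supset\cQ+\cD$. Since $0\in\cD$, each $\alpha_i\in\cQ+\cD\subset\cE$; as $\cE$ is convex and closed under non-negative scaling, this forces $\cone(\alpha_1,\dotsc,\alpha_e)=\cC\subset\cE$. To recover $\cD$, I would fix any $\alpha\in\cQ$ and any $d\in\cD$; then for every $t>0$ we have $\alpha+td\in\cQ+\cD\subset\cE$, hence $t^{-1}\alpha+d=t^{-1}(\alpha+td)\in\cE$. Letting $t\to\infty$ and using that cones in the paper's sense are closed (they are defined by finitely many non-strict linear inequalities), we conclude $d\in\cE$, so $\cD\subset\cE$. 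Finally, $\cE$ being convex and a cone means it is closed under addition (as $x,y\in\cE$ gives $\tfrac{x+y}{2}\in\cE$ by convexity, then $x+y\in\cE$ by scaling), so $\cC+\cD\subset\cE$.

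I do not anticipate any serious obstacle; the only delicate point is the passage $t\to\infty$ in the argument extracting $\cD$ from $\cQ+\cD$, which relies on the closedness of cones in the paper's convention. Everything else is routine convex geometry.
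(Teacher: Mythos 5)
Your proof is correct and follows essentially the same route as the paper: the key step in both is to fix $\alpha\in\cQ$, $d\in\cD$, observe that $t^{-1}\alpha+d = t^{-1}(\alpha+td)$ lies in the cone for all $t>0$, and let $t\to\infty$ using closedness to conclude $d$ belongs to the cone. The paper is terser (it leaves implicit the additivity of cones and the reason $\cQ\subset\cB$), but the argument is the same.
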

\begin{proof}
  Clearly,
  $\cC + \cD$ is a cone containing $\cP := \cQ + \cD$.
  Let $\cB\supset \cP$ be a cone.
  Then $\cQ \subset \cB$ and hence $\cC \subset \cB$.
  Fix $\xx \in \cQ$ and let $\yy\in \cD$.
  Then $\xx + a\yy\in \cP$ for $a > 0$ whence $a^{-1} \xx + \yy\in \cB$.
  As $\cB$ is closed, we conclude that $\yy\in \cB$ and thus $\cD \subset \cB$.
\end{proof}

Recall that (rational) polyhedra in $\RR^n$ are exactly the sets of
the form $\cQ + \cD$, where $\cQ\subset\RR^n$ is a (rational) polytope and
$\cD\subset\RR^n$ is a (rational) cone, see e.g.\ \cite[Thm~4.13]{Bar08}.
Let $\cP_0$ be a model of a non-empty rational half-open cone $\cC_0$.
Writing $\cP_0 = \cQ + \cD$ for (rational) $\cQ,\cD$ as in Lemma~\ref{lem:mincone}, 
Proposition~\ref{p:cone_models}(\ref{p:cone_models3}) allows us to recover
$\bar\cC_0$. 
  
\section{Examples}
\label{s:app}

We discuss examples of previously unknown topological zeta functions
computed using \textsf{Zeta} \cite{zeta}.
For more examples, we refer to the database of topological subring, ideal,
and submodule zeta functions included with \textsf{Zeta}.

\subsection{Five-dimensional nilpotent Lie algebras: \texorpdfstring{$\Fil_4$}{Fil4}}
\label{ss:Fil4}

As in \cite[Thm~3.6]{Woo05}, let
$\Fil_4$ be the nilpotent Lie ring with $\ZZ$-basis $(\ee_1,\dotsc,\ee_5)$
and Lie bracket $[\ee_1, \ee_2] = \ee_3$, $[\ee_1,\ee_3] = \ee_4$, $[\ee_1,\ee_4] = \ee_5$,
$[\ee_2,\ee_3] = \ee_5$, and $[\ee_i,\ee_j] = 0$ for $i\le j$ not listed above.
As we explained in \cite[\S 7.3]{topzeta}, with the sole exception of $\Fil_4
\otimes_{\ZZ} \CC$, each of the $16$ isomorphism classes of 
non-trivial nilpotent Lie $\CC$-algebras of dimension at most $5$ admits a $\ZZ$-form whose
local subring zeta functions have been computed.
We can use Algorithm~\ref{alg:main} and \textsf{Zeta} to confirm that for the
$15$ known types, the topological zeta function coincides with the one naively
deduced from $p$-adic formulae.
The local subring zeta functions of $\Fil_4$ have so far resisted 
attempts at computing them \cite[p.\ \!57]{Woo05}.
In \cite[Eqn~(7.8)]{topzeta}, we announced that
\begin{align}
  \nonumber
  \zeta_{\Fil_4,\topo}(\ess)  =\,\, &
  \bigl(392031360 \ess^9 - 5741480808 \ess^8 + 37286908278 \ess^7 -
  140917681751 \ess^6 +
  \\&\,\, 
  \nonumber
  341501393670 \ess^5 - 550262853249 \ess^4 + 589429290044 \ess^3  -
  \\&\,\, 
  \nonumber
  404678115300 \ess^2 +
  161557332768 \ess - 
  \\&\,\,
  \nonumber
  28569052512\bigr) {\mathlarger{/}}
  \bigl(3(15\ess - 26)(7\ess - 12)(7\ess - 13)(6\ess - 11)^3
  \\&\quad\quad
  (5\ess- 8)(5\ess - 9)(4\ess - 7)^2(3\ess - 4)(2\ess - 3)(\ess - 1)\ess\bigr).
  \label{eq:Fil4}
\end{align}
For a group-theoretic interpretation,
since the topological subgroup zeta function of a torsion-free, finitely
generated nilpotent group $G$ only depends on the $\CC$-isomorphism type of
$\fL(G)\otimes_{\QQ}\CC$ (see \cite[Prop.\ 5.19(ii)]{topzeta}),
we thus obtain a complete classification of topological subgroup zeta functions
of nilpotent groups of Hirsch length at most $5$, see the database included with \textsf{Zeta}.

We will now provide details on the computation leading to \eqref{eq:Fil4}.
In doing so, we illustrate the key steps of Algorithm~\ref{alg:main}.

\paragraph{Constructing an initial toric datum.}
The first step is to construct Laurent polynomials as in Theorem~\ref{thm:coneint}(\ref{thm:coneint1})
(see the end of \S\ref{s:background_gam}) and an associated toric datum
as in Remark~\ref{r:special};
the input of Algorithm~\ref{alg:main} then consists of said toric datum,
$\cT^0$ say, and a $(0,1)$-matrix $\beta \in \Mat_{5\times 15}(\NN_0)$
(which can be easily constructed as in Remark~\ref{r:special}).

Using the defining basis $(\ee_1,\dotsc,\ee_5)$ of $\Fil_4$ and
after performing simplification steps (see \S\ref{ss:simplify}), 
we thus obtain the initial toric datum $\cT^0 = (\cC_0; \,f_1,f_2,f_3)$,
where
\begin{alignat*}{4}
\cC_0 = \Bigl\{
(\omega_1,\dotsc,\omega_{15})\in \Orth^{15} :
\omega_{10} & \le \omega_1 + \omega_6,  &\quad&
\omega_{13} & &\le \omega_1 + \omega_{10},
\\
\omega_{15} & \le \omega_6 + \omega_{10}, &\quad& 
\omega_{15} & &\le \omega_1 + \omega_{13}
\Bigr\}
\end{alignat*}
and $f_1,f_2,f_3\in \QQ[X_1^{\pm 1},\dotsc,X_{15}^{\pm 1}]$ are given by
{
\small
\begin{align*}
f_1   \,=\,\, & \underline{X^{\phantom 1}_{2} X^{\phantom 1}_{10} X_{15}^{-1} - X^{\phantom 1}_{1} X^{\phantom 1}_{10} X_{13}^{-1} X_{14} X_{15}^{-1}}
+ X^{\phantom 1}_{1} X^{\phantom 1}_{11} X_{15}^{-1} 
\\
f_2  \,=\,\,  &
\underline{X^{\phantom 1}_{1} X^{\phantom 1}_{7} X_{13}^{-1}} - X^{\phantom 1}_{1} X^{\phantom 1}_{6} X_{10}^{-1} X^{\phantom 1}_{11} X_{13}^{-1} \\
f_3 \,=\,\, &
\underline{X^{\phantom 1}_{1} X^{\phantom 1}_{7} X_{13}^{-1} X^{\phantom 1}_{14} X_{15}^{-1} - X^{\phantom 1}_{2} X^{\phantom 1}_{7} X_{15}^{-1}}
+ X^{\phantom 1}_{3} X^{\phantom 1}_{6} X_{15}^{-1}  - X^{\phantom 1}_{1} X^{\phantom 1}_{8} X_{15}^{-1} +
\\
& X^{\phantom 1}_{1} X^{\phantom 1}_{6} X_{10}^{-1} X^{\phantom 1}_{12} X_{15}^{-1} -
X^{\phantom 1}_{1} X^{\phantom 1}_{6} X_{10}^{-1} X^{\phantom 1}_{11} X_{13}^{-1} X^{\phantom 1}_{14} X_{15}^{-1};
\end{align*}
}
please ignore the underlines at first reading.

\paragraph{Balancing and regularity testing.}
Our next task is to decompose $\cC_0$ using the normal cones of $\cN :=
\Newton(f_1f_2f_3) \subset \RR^{15}$ to obtain a system of \itemph{balanced} toric data.
Using \Sage{}, we find that $\cN$ is a $6$-dimensional polytope with
$27$-vertices and a total of 395 faces; note that \Sage{} regards $\emptyset$ as a
face of $\cN$ but that we do not.
For each face $\tau\subset \cN$, we then simplify the balanced toric datum
$\Bigl(\cC_0 \cap\NormalCone_{\tau}(\cN); \,f_1,f_2,f_3\Bigr)$ (see
\S\ref{ss:simplify}) and test it for regularity (see \S\ref{ss:regular}).
It turns out that all but 4 of these 395 toric data are already regular.

\paragraph{Singularity and reduction.}
We now consider one of the aforementioned four singular toric data
arising from $(\cC_0;f_1,f_2,f_3)$ in detail, namely $(\tilde\cC_0;f_1,f_3)$,
where
\begin{alignat*}{4}
\tilde\cC_0 = \Bigl\{
(\omega_1,\dotsc,\omega_{15})\in \Orth^{15} :
\omega_1 & \le \omega_2 + \omega_{13}, &\quad \omega_{10} & \le \omega_1 + \omega_6,
\\
\omega_{13} & \le \omega_1 + \omega_{7}, &\quad \omega_{13} & \le \omega_1 + \omega_{10},
\\
\omega_{15} & \le \omega_1 + \omega_{13}, &\quad \omega_{15} & \le \omega_6 + \omega_{10},
\\
\omega_1 + \omega_{14} & = \omega_2 + \omega_{13},
\\
\omega_2 + \omega_7 & < \omega_1 + \omega_8, &\quad \omega_2 + \omega_7 & < \omega_3 + \omega_6,
\\
\omega_2 + \omega_{10} & < \omega_1 + \omega_{11}, &\quad \omega_7 + \omega_{10} & < \omega_6 + \omega_{11},
\\
\omega_2 + \omega_7 + \omega_{10} & < \omega_1 + \omega_6 + \omega_{12}
\Bigr\},
\end{alignat*}
which is a $14$-dimensional half-open cone contained in $\cC_0$.
We see that the initial forms of $f_1$, $f_2$, and $f_3$ on $\tilde\cC_0$ are exactly the
underlined parts from above.
In particular, since $\init_{\tilde\cC_0}(f_2)$ is a term,
we can see why $f_2$ has been discarded by the simplification step.

One checks that while $\init_{\tilde\cC_0}(f_1)=0$ and $\init_{\tilde\cC_0}(f_3)=0$ 
both define smooth hypersurfaces within $\Torus^{15}_{\QQ}$, 
the rank condition defining regularity is violated precisely on the subvariety
(subtorus, in fact) defined by $X_1X_{14} = X_2 X_{13}$.
Indeed, looking at the initial forms of $f_1$ and $f_3$, we see that the failure
of regularity is due to these initial forms being \itemph{identical}
up to multiplication by a unit in $\QQ[X_1^{\pm 1},\dotsc,X_{15}^{\pm 1}]$.
The reduction procedure explained in \S\ref{ss:reduce}
arose from the observation
that such geometrically simple causes of singularity are remarkably common in practice.

Reduction can  ``repair'' the failure of regularity of
$(\tilde\cC_0;f_1,f_3)$ as follows.
Define terms $t_1 := X^{\phantom 1}_{2} X^{\phantom 1}_{10} X_{15}^{-1}$ and
$t_3 := - X^{\phantom 1}_{2} X^{\phantom 1}_{7} X_{15}^{-1}$ of $f_1$ and $f_3$,
respectively; note that $t_1^{-1}t_3^{\phantom 1} = - X_7^{\phantom 1}
X_{10}^{-1}$.
Define
$\tilde\cC_0^{\le} = \{ \omega\in \tilde\cC_0 : \omega_{10}\le \omega_7\}$,
$\tilde\cC_0^{>} = \{ \omega\in\tilde\cC_0 : \omega_7 < \omega_{10}\}$,
$g_3 = f_3 - t_1^{-1}t_3^{\phantom 1} f_1$, and
$g_1 = f_1 - t_1^{\phantom 1}t_3^{-1} f_3$. Then
\begin{align*}
  g_3 \,=\,\, &
  X^{\phantom 1}_3 X^{\phantom 1}_6 X_{15}^{-1} - X^{\phantom 1}_1 X^{\phantom
    1}_8 X_{15}^{-1} + X^{\phantom 1}_1 X^{\phantom 1}_7 X_{10}^{-1} X^{\phantom
    1}_{11} X_{15}^{-1} +
  \\ & X^{\phantom 1}_1 X^{\phantom 1}_6 X_{10}^{-1} X^{\phantom 1}_{12}
  X_{15}^{-1} - X^{\phantom 1}_1 X^{\phantom 1}_6 X_{10}^{-1} X^{\phantom
    1}_{11} X_{13}^{-1} X^{\phantom 1}_{14} X_{15}^{-1} \text{ and }
  \\
  g_1 \,=\,\, &
  X^{\phantom 1}_3 X^{\phantom 1}_6 X_7^{-1} X^{\phantom 1}_{10} X_{15}^{-1} - X^{\phantom 1}_1 X_7^{-1} X^{\phantom 1}_8 X^{\phantom 1}_{10} X_{15}^{-1} + X^{\phantom 1}_1 X^{\phantom 1}_{11}
  X_{15}^{-1} + \\ &X^{\phantom 1}_1 X^{\phantom 1}_6 X_7^{-1} X^{\phantom 1}_{12} X_{15}^{-1} - X^{\phantom 1}_1 X^{\phantom 1}_6 X_7^{-1} X^{\phantom 1}_{11} X_{13}^{-1}
  X^{\phantom 1}_{14} X_{15}^{-1}.
\end{align*}
Setting $\cT^{\le} := \bigl(\tilde\cC_0^{\le}; f_1,g_3\bigr)$
and $\cT^{>} := \bigl(\tilde\cC_0^{>};g_1,f_3\bigr)$,
we obtain a  partition $\bigl\{\cT^{\le}, \cT^{>}\bigr\}$
of $\bigl(\tilde\cC_0;f_1,f_3\bigr)$.
As we will now explain, our particular choice of a reduction candidate
(terminology as in \S\ref{ss:reduce}) eliminates the source of the singularity
of $\bigl(\tilde\cC_0;f_1,f_3\bigr)$ that we isolated above. 
In general, it is possible for choices of reduction candidates to
introduce new singularities.
In order to verify that this is not the case here, we apply the balancing
procedure from \S\ref{ss:balance} followed by simplification to both $\cT^{\le}$
and to $\cT^{>}$.
In doing so, each of these toric data is partitioned into $31$
balanced conditions, $31$ being the number of faces of the Newton
polytope of $g_1$ and of $g_3$.
Fortunately, every single one the resulting $62$ toric data is now
regular, which concludes our efforts regarding $\bigl(\tilde\cC_0;f_1,f_3\bigr)$. 

The other three singular toric data mentioned above can be handled in
a very similar way.
Each of them is cut in two by reduction and each piece is then decomposed into
$15$ regular conditions by \func{Balance}.
In particular, in each case, a single application of the reduction step followed
by balancing and simplification immediately yields regular conditions
only---this is not to be expected in general.

After completion of the main loop in Algorithm~\ref{alg:main}, we have constructed
a total of $543$ regular toric data constituting a partition of $\cT^0$ from above.

\paragraph{Final stage.}
It remains to apply the function \func{EvaluateTopologically} to each of the $543$
aforementioned toric data and to recover $\zeta_{\Fil_4,\topo}(\ess)$
from a sum of rational functions.
As it is unlikely  to offer any new insights, we chose not to give details
on the tedious acts of computing Euler characteristics, triangulating cones,
and manipulating rational functions that constitute this step.

\paragraph{Stats.}
We briefly indicate the extent to which practical applications of
Algorithm~\ref{alg:main} rely on machine computations.
Thus, using \textsf{Zeta} on an Intel Xeon E5-2670 ($8$~cores)
running Sage~6.3, the computation of $\zeta_{\Fil_4,\topo}(\ess)$ sketched above
took about $97$ minutes in total.
The main loop in Algorithm~\ref{alg:main} was completed after $2$~minutes;
the vast majority of time was then spent in the final line of
Algorithm~\ref{alg:main} which used $16$ parallel processes, see \S\ref{ss:impET}.
Using polynomial interpolation, the final formula \eqref{eq:Fil4} was then
recovered from a sum of {12,869,940} rational functions as explained in
\S\ref{ss:impET}.

\subsection{Other examples}

There are various interesting examples which are similar to $\Fil_4$ in the
sense that a single application of the reduction step to singular toric data
already suffices.
One such example is given by the topological submodule zeta function of the full
unipotent group $\mathrm U_5(\ZZ) \le \GL_5(\ZZ)$ acting on its natural module 
discussed in \cite[\S 7.3]{topzeta}.
For a non-nilpotent, commutative, and associative example,
we find the topological subring zeta function of $\ZZ[X]/X^4$ to be
{\small
\[
\zeta_{\ZZ[X]/X^4,\topo}(\ess) = 
\frac{2021760 s^5 - 8509620 s^4 + 14322332 s^3 - 12036071 s^2 + 5044460 s - 842400}
{168480 (6s - 5) (4s - 3) (s - 1)^6 s}.
\]
}
A computation of similar overall complexity which however requires multiple
iterations of reduction yields the formula for
$\zeta_{\mathfrak{gl}_2(\ZZ),\topo}(\ess)$ announced in \cite[(7.7)]{topzeta}.
For a more complicated example,
consider $\Fil_4 \oplus (\ZZ,0)$, where $(\ZZ,0)$ denotes $\ZZ$ regarded as an
abelian Lie ring.
After about 4~days (same machine as for $\Fil_4$ above), \textsf{Zeta}
reports that
{
\small
\begin{align}
\nonumber
\zeta_{\Fil_4 \oplus(\ZZ,0),\topo}(\ess) =\,\, &
\bigl(52839554826240 s^{15} - 1612571385729024 s^{14} +
\nonumber
\\&\,\, 22945067840268288 s^{13} - 201917310138409536 s^{12} +
\nonumber
\\&\,\, 1228942670032455984 s^{11} - 5479610770178424720 s^{10} +
\nonumber
\\&\,\, 18489925054934205732 s^9 - 48077179247205683304 s^8 +
\nonumber
\\&\,\, 97118269735864324559 s^7 - 152405042677332499112 s^6 +
\nonumber
\\&\,\, 184268407184801648476 s^5 - 168562287295854189878 s^4 +
\nonumber
\\&\,\, 112921211241642321545 s^3 - 52295417007047312650 s^2 +
\nonumber
\\&\,\, 14969814525806597400 s - 1996549752637440000\bigr)
{\mathlarger{/}}
\nonumber
\\&\quad\quad
(48 (15 s - 31) (15 s - 34) (13 s - 28) (12 s - 25) (9 s - 20) (7 s - 15) 
\nonumber
\\&\quad\quad
(7 s - 16)(6 s - 13)^3 (5 s - 11) (4 s - 9)^3 (3 s - 5) (s - 1) (s- 2)^4 s), 
\label{eq:Fil4ZZ}
\end{align}
}

For further examples of topological zeta functions of the type considered in
this article, see \textsf{Zeta} and the database that comes with it.

\subsection{On the reliability of our computations}

When it comes to trusting computer output such as the examples given above, 
caution is certainly warranted.
Apart from possible bugs in the author's code,
the sheer number of mathematical libraries and programs relied upon by \textsf{Zeta} is a natural source of concern.

\paragraph{Independent confirmation.}
As a simple test, we can use the many examples of local zeta functions
computed by Woodward and others and compare the associated ``naive''
topological zeta functions (obtained via symbolic expansion in $p-1$ as
indicated in the introduction) with the ones obtain using \textsf{Zeta}, assuming our method applies.
Our implementation passes this test for all examples from \cite{dSW08} that we
considered.
Conversely, our machine computations thus provide evidence for the
correctness of these formulae which were often obtained using complicated,
at least partially manual, and often undocumented computations.

\paragraph{Conjectures.}
For genuinely new examples such as the topological subring zeta functions of
$\Fil_4$ and $\Fil_4 \oplus (\ZZ,0)$, we regard the peculiar
conjectural features of topological zeta functions from \cite[\S 8]{topzeta}
as further evidence of the reliability of our implementation---indeed,
computational errors can easily destroy these properties. 
The lengthy formula \eqref{eq:Fil4ZZ}, for example, has all the properties
predicted by the conjectures in \cite[\S 8]{topzeta}.
In addition, we observe that $\zeta_{\Fil_4,\topo}(\ess)$ and $\zeta_{\Fil_4
  \oplus (\ZZ,0),\topo}(\ess)$ ``agree at infinity'' in the following sense.
Given any non-associative ring $\cA$ of additive rank $d$,
let $\magic(\cA) := \zeta_{\cA,\topo}({\ess}^{-1}) {\ess}^{d} \big\vert_{\ess=0}$.
The ``degree conjecture'' \cite[Conj.\ I]{topzeta} asserts that $0\not=
\magic(\cA) \not= \infty$. 
\addtocounter{conj}{4}
\begin{conj}
  \label{conj:magic}
  $\magic(\cA) = \magic(\cA \oplus (\ZZ,0))$.
\end{conj}
For instance, \eqref{eq:Fil4} and \eqref{eq:Fil4ZZ} show that
$\magic(\Fil_4) = \magic(\Fil_4 \oplus (\ZZ,0)) = 463/1350$. 

The effect of the operation $\cA\mapsto \cA \oplus(\ZZ,0)$ (let alone arbitrary
direct sums) on subring or ideal zeta functions (local or topological) is
poorly understood in general. For a few specific examples of rings $\cA$,
formulae for local zeta functions of $\cA 
\oplus(\ZZ^r,0)$ are known for all $r\ge 0$, see \cite{dSW08};
these formulae are consistent with Conjecture~\ref{conj:magic}.
The simple patterns exhibited by the formulae for known instances of such
families seem to be exceptional, as e.g.\ suggested by various examples of
topological zeta functions included with \textsf{Zeta}.
The experimental evidence underpinning Conjecture~\ref{conj:magic} is all 
the more remarkable in view of the generally increased complexity of
$\zeta_{\cA\oplus(\ZZ,0),\topo}(\ess)$ compared with $\zeta_{\cA,\topo}(\ess)$.

{
  \bibliographystyle{abbrv}
  \footnotesize
  \bibliography{topzeta}
}

\end{document}